\documentclass[11pt, reqno]{amsart}
\usepackage{euscript,amscd,amsgen,amsfonts,amssymb,latexsym,graphicx,psfrag}

\newcommand{\eqdef}{\stackrel{\scriptscriptstyle\rm def}{=}}

\newtheorem{theorem}{Theorem}

\newtheorem{proposition}{Proposition}
\newtheorem{corollary}{Corollary}
\newtheorem{definition}{Definition}

\newtheorem*{remarks}{Remarks}
\newtheorem{example}{Example}

\newcommand{\beha}{\begin{enumerate}}
\newcommand{\behe}{\end{enumerate}}
\renewcommand{\epsilon}{\varepsilon}

\newcommand{\bmin}{{ a_{w}}}
\newcommand{\bmax}{{b_{w}}}

\newcommand{\R}{{\rm Rot}}

\newcommand{\cM}{\EuScript{M}}

\newcommand{\bR}{{\mathbb R}}

\newcommand{\bZ}{{\mathbb Z}}
\newcommand{\bN}{{\mathbb N}}

\newcommand{\cA}{{\mathcal A}}
\newcommand{\cB}{{\mathcal B}}

\def\1{1\!\!1}

\def\and{\text{ and }}

                        \def\^{\tilde}

\def\STP{{\rm{(STP)}}}

\def\1{1\!\!1}

\def\rv{{\rm rv}}

\newtheorem*{thmA}{Theorem A}

\newtheorem*{thmB}{Theorem B}

\DeclareMathSymbol{\varnothing}{\mathord}{AMSb}{"3F}
\renewcommand{\emptyset}{\varnothing}
\title{Localized Pressure and equilibrium states}
\author{Tamara Kucherenko}\address{Department of Mathematics, The City College of New York, New York, NY, 10031, USA}\email{tkucherenko@ccny.cuny.edu}

\author{Christian Wolf}\address{Department of Mathematics, The City College of New York, New York, NY, 10031, USA}\email{cwolf@ccny.cuny.edu}

\thanks{This work was partially supported by a grant from the Simons Foundation (\#209846 to Christian Wolf).}

\begin{document}

\begin{abstract}
We introduce the notion of localized topological pressure for continuous maps on compact metric spaces. The localized pressure of a continuous potential $\varphi$ is computed by considering only those $(n,\epsilon)$-separated sets whose statistical sums with respect to an $m$-dimensional potential $\Phi$ are "close" to a given value $w\in \bR^m$.
We then establish for several classes of systems and potentials $\varphi$ and $\Phi$  a local version of the variational principle. We also construct examples showing that the assumptions in the localized variational principle are fairly sharp. Next, we study localized equilibrium states and show that even in the case of subshifts of finite type and H\"older continuous potentials, there are several new phenomena that do not occur
in the theory of classical equilibrium states. In particular, ergodic localized equilibrium states for H\"older continuous potentials are in general not unique.
\end{abstract}
\keywords{topological pressure, generalized rotation sets, variational principle, equilibrium states, thermodynamic formalism}
\subjclass[2000]{}
\maketitle

\section{Introduction}
\subsection{Motivation }
The thermodynamic formalism has been an important tool in the development of  the theory of dynamical systems. Originally, this subject was primeraly driven
by applications in  dimension theory  that  followed the pioneer works  carried out by Ruelle, Bowen and Manning and McCluskey \cite{B1, Ru2,MM}. These works inspired numerous studies and generalizations with applications far  beyond the sole focus on dimension.  For example, pressure can be applied to obtain information about Lyapunov exponents, dimension, multifractal spectra, or natural invariant measures.
 We refer to  \cite{BG,P,PU, Ru} for details and further references.

The main object in the thermodynamic formalism is the topological pressure, a certain functional
defined on the space of observables that encodes several important quantities of the underlying dynamical system.  The relation between the topological pressure and invariant measures is established by the variational principle. Namely, if $f:X\to X$ is a continuous map on a compact metric space and  $\varphi:X\to\bR$ is a continuous potential, then the topological pressure  $P_{\rm top}(\varphi)$ is given by the supremum of the free energy of the invariant probability measures (see \eqref{eqvarpri} for the precise statement). This result is powerful in part because it connects in a natural but unexpected way topological and statistical dynamics. Invariant probabilities maximizing free energy are called equilibrium states. The study of equilibrium states (existence, uniqueness and properties) has a long history and the results are widely spread in the literature, yet a complete understanding is still lacking today. We refer to \cite{Bo1,Bo2,HRu,Ke} for references and details.

Our focus in this paper is somewhat different. We introduce a  localized version of the topological pressure where the localization results from using  only those orbits in the computation of the pressure whose statistical averages with respect to a given $m$-dimensional potential $\Phi$ are close to a vector $w\in \bR^m$. We  then establish a version of the localized variational principle for
a wide variety of systems and potentials. We also show that the assumptions in our localized variational principle are fairly sharp. Finally, we develop the theory of localized equilibrium states and derive non-uniqueness results for these equilibrium states. Our results significantly distinguish localized equilibrium states from  the theory of classical equilibrium states.

The results in this paper are related and can be considered in some sense extensions of results in the higher dimensional multifractal analysis developed by Barreira, Saussol, Schmeling, Takens, Verbitskiy, and others (see for example \cite{BSS,BS,
TV}). For  localizations using  restrictions of  the pressure to non-compact subsets we refer to \cite{C,PP,T} and the references therein.
We will now describe our results in more detail.

\subsection{ Basic definitions and statement of the results. }
Let $f:X\to X$ be a continuous map on a compact metric space $(X,d)$. We consider continuous potentials $\varphi:X\to\bR$ and $\Phi=(\phi_1,\cdots,\phi_m):X\to \bR^m$. We think of $\varphi$ as our target potential for computing the localized topological pressure and of $\Phi$ as the potential providing the localization. For $n\in \bN$ and $\epsilon>0$, we say that  $F\subset X$ is $(n,\epsilon)$-separated if for all $x,y\in F$ with $x\not= y$ we have $d_n(x,y)\eqdef \max_{k=0,\cdots,n-1} d(f^k(x),f^k(y))\geq \epsilon$. Note that $d_n$ is a metric (called Bowen metric) that induces the same topology on $X$ as $d$.
For $x\in X$ and $n\in \bN$, we denote by $\frac{1}{n}S_n\Phi(x)$
the $m$-dimensional Birkhoff average at $x$ of length $n$ with respect to $\Phi$, where
\begin{equation}\label{defSnm}
S_n\Phi(x)=\left(S_n\phi_1(x),\ldots,S_n\phi_m(x)\right)
\end{equation}
and $S_n\phi_i(x)=\sum_{k=0}^{n-1} \phi_i(f^k(x))$.
Given $w\in \bR^m$ and $r>0$ we say a set $F\subset X$ is a $(n,\epsilon,w,r)$-set if $F$ is $(n,\epsilon)$-separated set and  for all $x\in F$ the Birkhoff average $\frac{1}{n}S_n\Phi(x)$ is contained in
the Euclidean ball $D(w,r)$ with center $w$ and radius $r$.
We define the \emph{localized topological pressure} of the potential $\varphi$ (with respect to $\Phi$ and $w$) by
\begin{equation}\label{loctopprint}
P_{\rm top}(\varphi,\Phi,w)=
\lim_{r\to 0}\lim_{\varepsilon \to 0}
            \limsup_{n\to \infty}
            \frac{1}{n} \log N_{\varphi}(n,\varepsilon, w, r),
\end{equation}
where
\begin{equation} N_{\varphi}(n,\varepsilon, w, r)=\\ \sup \left\{\sum_{x\in F}e^{S_n\varphi(x)}:\, F\text{ is } (n,\varepsilon,w,r)\text{-set }\right\}.
\end{equation}
This definition is analogous to that of the classical topological pressure with the exception that we here only consider orbits with Birkhoff averages  close to $w$.  Moreover, when we omit the limit $r\to 0$ in \eqref{loctopprint} and choose $r$ large enough that the range of $\Phi$ is contained in $D(w,r)$, then we obtain the classical topological pressure of $\varphi$.

Note that the definition of $P_{\rm top}(\varphi,\Phi,w)$ is only meaningful if $D(w,r)$ contains statistical averages with respect to $\Phi$ for infinitely many $n$ and arbitrarily small $r$.
We call the corresponding set of points $w$  the {\it pointwise rotation set} of $\Phi$ and denote it by $\R_{Pt}(\Phi)$, that is
 \begin{equation}\label{defRf}
\R_{Pt}(\Phi)=
\left\{w\in \bR^m: \forall r>0\ \forall N\ \exists n\geq N\ \exists\ x\in X: \  \frac{1}{n}S_n\Phi(x)\in D(w,r)\right\}
\end{equation}

Next, we discuss a measure-theoretic approach to rotation sets and localized pressure. We denote by $\cM$ the set of all Borel $f$-invariant probability measures on $X$ endowed with the weak$^\ast$ topology. Following \cite{Je}, we define the {\it generalized rotation set} of  $\Phi$   by
\begin{equation} \label{defrotset}
 \R(\Phi)= \left\{\rv(\mu): \mu\in\cM\right\},
\end{equation}
where $\rv(\mu)=\left(\int \phi_1\ d\mu,\ldots,\int \phi_m\ d\mu\right)$
 denotes the rotation vector of the measure $\mu$. We call $\cM_\Phi(w)=\{\mu\in \cM: \rv(\mu)=w\}$ the rotation class of $w$.
In \cite{KW} we study the relationship between the pointwise rotation set and generalized rotation set of $\Phi$. In particular, we show that   $\R_{Pt}(\Phi)\subset \R(\Phi)$ with strict inclusion in certain cases.  We also provide criteria for the equality of the two rotation sets.
We refer to the overview article \cite{Mi1} and to \cite{Je,KW,Z} for
further details about rotation sets.
For $w\in\R(\Phi)$, we define the \emph{localized measure-theoretic pressure} of the potential $\varphi$ (with respect to $\Phi$ and $w$) by

\begin{equation}\label{locmespress}
P_{\rm m}(\varphi,\Phi,w)=\sup\left\{h_\mu(f)+\int_X\varphi\,d\mu:\,\mu \in \cM_\Phi(w)\right\}.
\end{equation}
In case we take the supremum in \eqref{locmespress} over all invariant measures we obtain the classical measure-theoretic pressure.
The classical variational principle (without localization) states that the topological and   the measure-theoretic versions of the pressure coincide. However, it turns out that in the case of localized pressure, the measure-theoretic and topological pressures may differ and strict inequalities can occur in both directions. This follows from the Examples 1 and 2  given in Section 3. On the other hand,  the following
result (see Theorem \ref{theovar} in the text) gives a fairly complete description of the assumptions needed to still have a variational principle.

\begin{thmA} Let $f:X\to X$ be a continuous map on a compact metric space $X$ that is a Besicovitch space.  Let $\varphi:X\to\bR$ and $\Phi:X\to\bR^m$ be continuous
and let $w\in \R(\Phi)$ be such that the map $v\mapsto P_m(\varphi,\Phi, v)$ is continuous at $w$ and $P_{\rm m}(\varphi,\Phi, w)$ is approximated by ergodic measures. Then $P_{\rm top}(\varphi,\Phi, w)=P_{\rm m}(\varphi,\Phi, w)$.
\end{thmA}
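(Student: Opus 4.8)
The plan is to prove the two inequalities $P_{\rm top}(\varphi,\Phi,w)\ge P_{\rm m}(\varphi,\Phi,w)$ and $P_{\rm top}(\varphi,\Phi,w)\le P_{\rm m}(\varphi,\Phi,w)$ separately, the first using that $P_{\rm m}(\varphi,\Phi,w)$ is approximated by ergodic measures and the second using the continuity of $v\mapsto P_{\rm m}(\varphi,\Phi,v)$ at $w$. Two bookkeeping remarks are used throughout. First, $N_\varphi(n,\varepsilon,w,r)$ is nondecreasing in $r$ and nonincreasing in $\varepsilon$, so the double limit in \eqref{loctopprint} equals $\inf_{r>0}\sup_{\varepsilon>0}\limsup_{n\to\infty}\frac1n\log N_\varphi(n,\varepsilon,w,r)$. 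Second, if $F$ is an $(n,\varepsilon,w,r)$-set and $\nu=\sum_{x\in F}(e^{S_n\varphi(x)}/\sum_{y\in F}e^{S_n\varphi(y)})\,\delta_x$, then the measure $\mu_F=\frac1n\sum_{k=0}^{n-1}f^k_*\nu$ satisfies $\int\Phi\,d\mu_F=\int\frac1n S_n\Phi\,d\nu\in D(w,r)$, since $D(w,r)$ is convex and contains every $\frac1n S_n\Phi(x)$, $x\in F$.

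For the lower bound, fix $\eta>0$ and, invoking that $P_{\rm m}(\varphi,\Phi,w)$ is approximated by ergodic measures, choose an ergodic $\mu$ with $h_\mu(f)+\int\varphi\,d\mu>P_{\rm m}(\varphi,\Phi,w)-\eta$ and with $\rv(\mu)$ arbitrarily close to $w$. Fix $r>0$ with $|\rv(\mu)-w|<r/2$. A localized Katok construction then produces, for all large $n$, an $(n,\varepsilon,w,r)$-set: by Birkhoff's theorem (applied to $\Phi$ and to $\varphi$) together with the Shannon--McMillan--Breiman / Brin--Katok local entropy theorem for the ergodic measure $\mu$, the set $A_n$ of points $x$ with $|\frac1n S_n\Phi(x)-\rv(\mu)|<r/2$, with $\frac1n S_n\varphi(x)>\int\varphi\,d\mu-\eta$, and with $\mu(B_n(x,\varepsilon))\le e^{-n(h_\mu(f)-\eta-c(\varepsilon))}$ (here $B_n(\cdot,\varepsilon)$ is the $d_n$-ball and $c(\varepsilon)\to0$ as $\varepsilon\to0$) has $\mu(A_n)\to1$; a maximal $(n,\varepsilon)$-separated subset $F_n\subset A_n$ covers $A_n$ by the balls $B_n(x,\varepsilon)$, hence $\# F_n\ge \mu(A_n)\,e^{n(h_\mu(f)-\eta-c(\varepsilon))}$, while $\sum_{x\in F_n}e^{S_n\varphi(x)}\ge\# F_n\cdot e^{n(\int\varphi\,d\mu-\eta)}$. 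Since $A_n\subset\{x:\frac1n S_n\Phi(x)\in D(w,r)\}$, the set $F_n$ is indeed an $(n,\varepsilon,w,r)$-set, so $\sup_{\varepsilon>0}\limsup_n\frac1n\log N_\varphi(n,\varepsilon,w,r)\ge h_\mu(f)+\int\varphi\,d\mu-2\eta$. Letting $r\to0$ (re-choosing the ergodic approximant so that its rotation vector tends to $w$ and its free energy tends to $P_{\rm m}(\varphi,\Phi,w)$) and then $\eta\to0$ gives $P_{\rm top}(\varphi,\Phi,w)\ge P_{\rm m}(\varphi,\Phi,w)$.

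For the upper bound, fix $r>0$ and $\varepsilon>0$ and, for each $n$, choose an $(n,\varepsilon,w,r)$-set $F_n$ with $\sum_{x\in F_n}e^{S_n\varphi(x)}\ge N_\varphi(n,\varepsilon,w,r)-1$. With $\nu_n,\mu_{F_n}$ as above, pass to a subsequence along which $\mu_{F_n}\to\mu$ in the weak$^\ast$ topology; then $\mu\in\cM$, and by the second bookkeeping remark $\rv(\mu)=\int\Phi\,d\mu\in\overline{D(w,r)}$, i.e. $|\rv(\mu)-w|\le r$. The Misiurewicz estimate --- fix a finite partition of $X$ of diameter $<\varepsilon$ with $\mu$-null boundary, note that $(n,\varepsilon)$-separation puts at most one point of $F_n$ in each atom of its $n$-th refinement, and run the standard convexity/subadditivity argument for the measures $\nu_n$ and $\mu_{F_n}$ --- yields $h_\mu(f)+\int\varphi\,d\mu\ge\limsup_n\frac1n\log N_\varphi(n,\varepsilon,w,r)$. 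Hence $\sup_{\varepsilon>0}\limsup_n\frac1n\log N_\varphi(n,\varepsilon,w,r)\le\sup\{P_{\rm m}(\varphi,\Phi,v):v\in\R(\Phi),\ |v-w|\le r\}$, and letting $r\to0$ and using continuity of $v\mapsto P_{\rm m}(\varphi,\Phi,v)$ at $w$ gives $P_{\rm top}(\varphi,\Phi,w)\le P_{\rm m}(\varphi,\Phi,w)$.

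The main obstacle is the lower bound. A separated set manufactured from $\mu$ is naturally supported on the level region $\{x:\frac1n S_n\Phi(x)\approx\rv(\mu)\}$ rather than on the localization region $\{x:\frac1n S_n\Phi(x)\in D(w,r)\}$, so one is forced to move $\rv(\mu)$ to within $r/2$ of $w$ before shrinking $r$; keeping the Katok weight estimate uniform while repeatedly intersecting with a shrinking localization region is delicate, and this is where I expect the Besicovitch hypothesis on $X$ to be essential --- it is what lets one cover $A_n$ by Bowen balls with a priori bounded multiplicity, so that the comparison between $\# F_n$ and $e^{nh_\mu(f)}$ survives the localization. In the upper bound the only loss is the slack $\rv(\mu)\in\overline{D(w,r)}$ versus $\rv(\mu)=w$, and this is exactly repaired by the continuity hypothesis at $w$; the examples in Section 3 show that neither the continuity assumption nor the ergodic-approximation assumption can be dropped.
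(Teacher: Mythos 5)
Your proof is correct, and the two inequalities are handled with the same global strategy as in the paper (lower bound via ergodic approximation, upper bound via the Misiurewicz construction and the continuity hypothesis). The upper bound is essentially identical to the paper's argument, down to the observation that the accumulation measure has rotation vector in $\overline{D(w,r)}$ and that continuity of $v\mapsto P_{\rm m}(\varphi,\Phi,v)$ at $w$ absorbs the slack.

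The lower bound, however, takes a genuinely different route. The paper invokes He's spanning-set characterization of measure-theoretic pressure, localizes a minimal $(n,\varepsilon_i)$-spanning set to obtain $\tilde E_n(\varepsilon_i)$ whose Bowen balls meet the Birkhoff basin $\cB_{n,r_0/3}(\mu)$, and then decomposes $\tilde E_n(\varepsilon_i)$ into $\beta$ many $(n,\varepsilon_i)$-separated sets using the Besicovitch covering theorem (with $\beta$ the Besicovitch constant, which is independent of $n$ because the metrics $d_n$ increase). That is precisely where the Besicovitch hypothesis on $X$ enters. Your argument instead combines Birkhoff (for $\Phi$ and $\varphi$) with the Brin--Katok local entropy formula and an Egorov-type uniformization to produce a set $A_n$ of measure close to $1$ lying in the localization region, and then takes a \emph{maximal} $(n,\varepsilon)$-separated subset $F_n\subset A_n$. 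Maximality alone makes the Bowen $\varepsilon$-balls around $F_n$ cover $A_n$, so $\# F_n\ge\mu(A_n)e^{n(h_\mu(f)-\eta-c(\varepsilon))}$ with no covering-multiplicity input whatsoever. Contrary to the expectation you voice in your last paragraph, your construction does not use the Besicovitch property at any point; the only place one needs it in the paper is the spanning-to-separated conversion, which you sidestep by constructing the separated set directly. If written out carefully (in particular the Egorov step that turns the a.e. Brin--Katok liminf into a uniform bound on $\mu(B_n(x,\varepsilon))$ over a large-measure set), your proof would actually establish Theorem A without the hypothesis that $X$ is a Besicovitch space, which would be a small strengthening of the statement.
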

The assumption that $P_{\rm m}(\varphi,\Phi, w)$ is approximated by ergodic measures (see Section 3 for the precise definition) cannot be dropped in Theorem A. Indeed,  Example 1 does not satisfy this assumption and $P_{\rm top}(\varphi,\Phi,w)<P_{\rm m}(\varphi,\Phi,w)$ holds. On the other hand, without the assumption that $v\mapsto P_m(\varphi,\Phi, v)$ is continuous at $w$, Theorem A is in general not true, which is a consequence of Example 2.
We recall that the continuity of $v\mapsto P_m(\varphi,\Phi, v)$ holds for all $w\in \R(\Phi)$ if the entropy map $\mu\mapsto h_\mu(f)$ is upper semi-continuous. In particular, this is true if $f$ is expansive \cite{Wal:81}, a $C^\infty$ map on a compact smooth Riemannian manifold \cite{N} or satisfies the entropy-expansiveness (as for example certain partial hyperbolic systems \cite{FDPV}). Recently, there has been significant progress in finding milder conditions that imply the upper-semicontinuity of the entropy function (see for example \cite{CT}).

We note that Theorem A holds for a wide variety of systems and potentials. In particular, Theorem A holds for systems with strong thermodynamic properties (STP) (see Section 3).

Next, we present our results about localized equilibrium states. Fix $w\in \R(\Phi)$.
We say  $\mu\in \cM_\Phi(w)$ is a localized equilibrium state of $\varphi\in C(X,\bR)$ (with respect to $\Phi$ and $w$) if
\begin{equation}\label{defeqsta}
h_\mu(f)+ \int_X \varphi\ d\mu =
\sup_{\nu\in \cM_\Phi(w)} \left(h_\nu(f)+\int_X \varphi\,d\nu\right).
\end{equation}
This definition is analogous to that of  a classical equilibrium state with the  exception that we here only consider  invariant measures in $\cM_\Phi(w)$ rather than all invariant measures. Evidently, the upper semi-continuity of the entropy map guarantees the existence of at least one localized equilibrium state. Unlike in the case of classical equilibrium states, there does not need to exist an ergodic localized equilibrium state (see Example 3).
In Section 4 we introduce the class of systems with strong thermodynamic properties that include subshifts of finite type, hyperbolic systems and expansive homeomorphisms with specification. These systems exhibit the strongest possible properties for classical equilibrium states. In particular,  for each H\"older continuous potential $\varphi$, there exists a unique equilibrium state $\mu_\varphi$ (which is ergodic) and $\mu_\varphi$ has the Gibbs property. We show that  this result does not carry over to localized equilibrium states. In Example 4 we consider a shift map
 and construct a Lipschitz continuous potential $\Phi$ exhibiting exactly two ergodic localized equilibrium states, none of which is Gibbs. We call the corresponding rotation set the "fish" due its shape. We study this example in great detail and derive properties that can be used to construct further counter examples.
Indeed, we are able to prove that the boundary of the fish is an infinite polygon and compute an exact formula for the corresponding vertices.
By slightly modifying this example, we show that the cardinality of ergodic localized equilibrium states is in general not preserved under small perturbations of the potential.
All these examples are formulated for $\phi\equiv 0$ (i.e. the localized entropy) and $w\in \partial \R(\Phi)$. In  Theorem B (i) (see below), we show that these phenomena do not not occur if $w\in$ int $\R(\Phi)$.

This motivates the following definition: Let $\mu$ be a localized equilibrium state of $\varphi$ (with respect to $\Phi$ and $w$). We say $\mu$ is an
  {\it interior localized equilibrium state}  if  $(\int\varphi\ d\mu,w)\in {\rm ri}\ \R(\varphi,\Phi)$ (where ${\rm ri}$ denotes the relative interior of the set), otherwise we say $\mu$ is a {\it localized equilibrium state at the boundary}. Without loss of generality we can always assume that $\dim \R(\Phi)=m$ (i.e. $\R(\Phi)$ has non-empty interior $\bR^m$) because otherwise we could just consider a lower dimensional affine subspace.
The following result shows that interior equilibrium states still share many of the properties of classical equilibrium states.

\begin{thmB}
Suppose that $f:X\to X$ is a system with strong thermodynamic properties. Let $\varphi$ and $\Phi$ be H\"older continuous potentials, and let $w\in {\rm int }\  \R(\Phi)$.
Then
\begin{enumerate}
\item[(i)] If $\dim \R(\varphi,\Phi)=m$, then there exists a unique (ergodic) localized equilibrium state at $w$.
\item[(ii)]
 Suppose $\dim \R(\varphi,\Phi)=m+1$ and that
all localized equilibrium states of $\varphi$ are interior equilibrium states. Then the set of ergodic localized equilibrium states  is  non-empty and finite.
  \item[(iii)]
 Under each of the assumptions {\rm (i)} or {\rm (ii)}, every ergodic localized equilibrium state $\mu_\varphi$ is a classical equilibrium state of the potential $s\varphi+t\cdot \Phi$ for some $s\in \bR$ and $t\in \bR^m$.
\end{enumerate}
\end{thmB}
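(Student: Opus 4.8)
The plan is to run a thermodynamic‑duality (Lagrange–multiplier) argument, trading the constrained variational problem defining $P_{\rm m}(\varphi,\Phi,w)$ for unconstrained ones for the family $s\varphi+t\cdot\Phi$. Two objects carry the argument. On one side, $v\mapsto P_{\rm m}(\varphi,\Phi,v)$ is \emph{concave} on $\R(\Phi)$ (because $\mu\mapsto h_\mu(f)$ is affine and $\mu\mapsto\int\varphi\,d\mu$ is linear), it is finite near $w$, and the supremum defining it is attained since systems with strong thermodynamic properties are expansive, hence have finite entropy and an upper semicontinuous entropy map. On the other side, the classical pressure function $\cP(s,t)\eqdef P_{\rm top}(s\varphi+t\cdot\Phi)$ on $\bR^{m+1}$ is \emph{convex}; for these systems every H\"older potential has a unique ergodic Gibbs equilibrium state, so $\cP$ is differentiable with $\nabla\cP(s,t)=\bigl(\int\varphi\,d\mu_{s,t},\,\rv(\mu_{s,t})\bigr)$, where $\mu_{s,t}$ is the equilibrium state of $s\varphi+t\cdot\Phi$; and $\cP$ is \emph{strictly} convex exactly under cohomological independence of the potentials, i.e.\ when $\dim\R(\varphi,\Phi)=m+1$ (as in (ii)) together with $\dim\R(\Phi)=m$ (which we assume throughout). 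Then $\nabla\cP$ is a bijection onto $\inn\R(\varphi,\Phi)$, and likewise $t\mapsto P_{\rm top}(t\cdot\Phi)$ is strictly convex with gradient a bijection onto $\inn\R(\Phi)$; these are the standard Legendre‑type facts linking $\cP$ and the entropy function via Fenchel duality.

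I would prove (iii) first, since it underlies (i) and (ii). As $w\in\inn\R(\Phi)$, the concave function $v\mapsto P_{\rm m}(\varphi,\Phi,v)$ has a supergradient $t_0$ at $w$; for every $\nu\in\cM$ with $\rv(\nu)=v$ this gives $h_\nu(f)+\int\varphi\,d\nu\le P_{\rm m}(\varphi,\Phi,v)\le P_{\rm m}(\varphi,\Phi,w)+t_0\cdot(v-w)$, and after subtracting $t_0\cdot v=\int t_0\cdot\Phi\,d\nu$ and taking the supremum over $\nu$ one obtains $P_{\rm top}(\varphi-t_0\cdot\Phi)\le P_{\rm m}(\varphi,\Phi,w)-t_0\cdot w$. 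If $\mu_\varphi$ is an ergodic localized equilibrium state at $w$, then $\rv(\mu_\varphi)=w$ and $h_{\mu_\varphi}(f)+\int\varphi\,d\mu_\varphi=P_{\rm m}(\varphi,\Phi,w)$, so $\mu_\varphi$ attains equality throughout, i.e.\ it is a classical equilibrium state of $\varphi-t_0\cdot\Phi$; this proves (iii) with $s=1$, $t=-t_0$ (under the hypotheses of (i) one may instead take $s=0$, as the next step shows).

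For (i): $\dim\R(\varphi,\Phi)=m$ forces $\R(\varphi,\Phi)$ to be the graph of an affine map $\ell$ over $\R(\Phi)$, so $\int\varphi\,d\mu$ depends only on $\rv(\mu)$ and $P_{\rm m}(\varphi,\Phi,w)=\ell(w)+\sup\{h_\mu(f):\rv(\mu)=w\}$; hence the localized equilibrium states at $w$ are precisely the entropy maximizers in $\cM_\Phi(w)$. Since $w\in\inn\R(\Phi)$ there is a unique $t^{\ast}$ with $\rv(\mu_{t^{\ast}\cdot\Phi})=w$, and the equality case of $h_\mu(f)+t^{\ast}\cdot\rv(\mu)\le P_{\rm top}(t^{\ast}\cdot\Phi)$ identifies $\mu_{t^{\ast}\cdot\Phi}$ as the unique such maximizer — ergodic, Gibbs, and a classical equilibrium state of $0\cdot\varphi+t^{\ast}\cdot\Phi$. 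For (ii): write $P_{\rm m}(\varphi,\Phi,w)=\max_a\bigl(\mathcal{E}(a,w)+a\bigr)$, the maximum over $a$ with $(a,w)\in\R(\varphi,\Phi)$, where $\mathcal{E}(a,v)=\sup\{h_\mu(f):\int\varphi\,d\mu=a,\ \rv(\mu)=v\}$ is concave on $\R(\varphi,\Phi)$; a localized equilibrium state at $w$ is an entropy maximizer over a slice $\{\int\varphi=a,\ \rv=w\}$ for some maximizing $a$. The hypothesis that all localized equilibrium states are interior puts every such $(a,w)$ into $\inn\R(\varphi,\Phi)$, where the Legendre duality between $\cP$ and $\mathcal{E}$ is valid and forces the slice‑maximizer to be the unique $\mu_{s,t}$ with $\nabla\cP(s,t)=(a,w)$. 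Thus the ergodic localized equilibrium states at $w$ are in bijection with the $(s,t)$ solving $\nabla\cP(s,t)=(a,w)$ subject to the first‑order condition $s=1$ at a maximizing $a$; since $t\mapsto\rv(\mu_{1,t})$ is the gradient of the strictly convex function $t\mapsto P_{\rm top}(\varphi+t\cdot\Phi)$ (strict convexity from $\dim\R(\Phi)=m$), this solution set is finite, which yields the finiteness in (ii), and (iii) for these measures as well.

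The crux is (ii): excluding infinitely many ergodic localized equilibrium states. The ``all interior'' hypothesis is indispensable because it confines every relevant measure to $\inn\R(\varphi,\Phi)$, where the pressure–entropy conjugacy holds and a slice‑entropy‑maximizer is a genuine classical equilibrium state of some $s\varphi+t\cdot\Phi$; on $\partial\R(\varphi,\Phi)$ an entire face of $\cM$ can consist of such maximizers — this is the boundary mechanism behind the non‑unique ``fish'' behavior of Example~4 — so it cannot be dropped. The non‑degeneracy $\dim\R(\varphi,\Phi)=m+1$, $\dim\R(\Phi)=m$ supplies the strict convexity of $\cP$ and of $t\mapsto P_{\rm top}(\varphi+t\cdot\Phi)$, hence the injectivity of the rotation maps, and it is precisely here that the full strength of the strong thermodynamic properties enters, namely differentiability of the pressure together with the uniqueness and the Gibbs property of H\"older equilibrium states. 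The technically fussy but genuinely needed points are the convex‑analytic surjectivity statements — that $\nabla\cP$ maps onto $\inn\R(\varphi,\Phi)$ and that the maximizing value of $a$ in (ii) lies in the interior — which must be verified with enough uniformity in a neighborhood of $w$.
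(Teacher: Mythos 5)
Your proposal is correct and reaches the stated conclusions, but for part (ii) it takes a genuinely different route than the paper, and in fact proves something sharper. The paper parametrizes the interior slice-maximizers by the scalar $\alpha\in(\bmin,\bmax)$ via the real-analytic diffeomorphism $F(s,t)=(\int\varphi\,d\mu_{s,t},\rv(\mu_{s,t}))$ from \cite{KW}, and concludes finiteness by the identity theorem: infinitely many ergodic localized equilibrium states would force the real-analytic function $\alpha\mapsto h_{\mu_{g(\alpha)}}+\int\varphi\,d\mu_{g(\alpha)}$ to be constant, and passing to an endpoint of $(\bmin,\bmax)$ would then manufacture a boundary equilibrium state, contradicting the hypothesis. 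You instead exploit the Legendre relation $\partial_a\mathcal{E}(a,w)=-s$ at $(a,w)=\nabla\cP(s,t)$ together with the interior stationarity condition $\partial_a\mathcal{E}(a^{\ast},w)+1=0$ to pin $s^{\ast}=1$, and then use injectivity of $t\mapsto\rv(\mu_{1,t})$ (the gradient of the strictly convex map $t\mapsto P_{\rm top}(\varphi+t\cdot\Phi)$, strict since $\dim\R(\Phi)=m$) to leave at most one candidate $t^{\ast}$. Combined with the existence step — which should come not from a separate surjectivity claim but, as in the paper, from upper semicontinuity of entropy, the all-interior hypothesis, and Proposition~\ref{protriv}, which identifies the resulting equilibrium state as some $\mu_{s^{\ast},t^{\ast}}$ — your argument actually yields \emph{uniqueness} of the ergodic localized equilibrium state in case (ii), strictly stronger than the paper's ``non-empty and finite''; you under-state your own result by calling the solution set merely finite. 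The one real gap in the write-up is that the Legendre identity $\partial_a\mathcal{E}=-s$ and the ensuing first-order condition $s^{\ast}=1$ are asserted rather than derived; these are the crux of your version and should be spelled out (they follow from the envelope theorem applied to $\mathcal{E}(a,v)=\inf_{(s,t)}(\cP(s,t)-sa-t\cdot v)$). Parts (i) and (iii) of your proposal align closely with the paper's treatment — Jenkinson's bijection $t\mapsto\rv(\mu_{t\cdot\Phi})$ onto ${\rm int}\,\R(\Phi)$ for (i), and supergradients of the concave map $v\mapsto P_{\rm m}(\varphi,\Phi,v)$ together with the variational principle for (iii) — and need no separate comment.
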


We note that part (i) of Theorem B holds in particular for $\varphi\equiv 0$ (and more generally if $\varphi$ is cohomologous to a constant). Therefore, the assumption $w\in {\rm int}\ \R(\Phi)$ implies the existence of an unique localized measure of maximal entropy. Another interesting feature of Theorem B is that in both cases, (i) and (ii) the ergodic localized equilibrium state is a classical equilibrium state. This implies that if $f$ is a subshift of finite type, a uniformly hyperbolic system or an expansive homeomorphism with specification, any ergodic localized equilibrium state is a Gibbs state.

The proof of Theorem B  relies heavily on methods from the thermodynamic formalism and, in particular, on the analyticity
of the topological pressure for H\"older continuous potentials. Moreover, we use results of Jenkinson \cite{Je} as  key ingredients.

This paper is organized as follows: In Section 2, we review some background material.
Section 3 is devoted to the proof of the localized variational principle (Theorem A) and  the construction of certain  examples showing that without  the assumptions of Theorem A, the localized variational principle fails. Finally, in Section 4 we discuss localized equilibrium states and discover fundamental differences between the theory of classical and localized equilibrium states. In particular, we prove Theorem B for systems with strong thermodynamic properties.

%---------------------------------------------------------------------------
\section{Preliminaries}
%---------------------------------------------------------------------------

In this paper we consider deterministic discrete-time dynamical systems given by a continuous map $f:X\to X$ on a compact metric space $(X,d)$. We are concerned with a continuous potential $\varphi:X\to \bR$ and an $m$-dimensional continuous potential $\Phi=(\phi_1,\ldots,\phi_m):X\to \bR^m$. Consider the set $\cM$ of all Borel $f$-invariant probability measures endowed with weak$^*$ topology and denote by $\cM_E\subset \cM$ the subset of ergodic measures. We recall the definition of the pointwise rotation set  $\R_{Pt}(\Phi)$ (see \eqref{defRf}) and the rotation set $\R(\Phi)$ (see \eqref{defrotset}). Similarly, the {\it ergodic rotation set} is defined by
\begin{equation}
\R_E(\Phi)= \left\{\rv(\mu): \mu\in\cM_E\right\}.
\end{equation}
Rotation sets originated from Poincar\'e's rotation numbers for circle homeomorphisms \cite{Po}. The relation between the three different rotation sets is studied in detail in \cite{KW}.  Both, $\R_{Pt}(\Phi)$ and $\R(\Phi)$ are compact and $\R(\Phi)$ is convex. We always have
\begin{equation}
\R_{E}(\Phi)\subset \R_{Pt}(\Phi)\subset \R(\Phi),
\end{equation}
 where both inclusions can be strict. The first inclusion follows from Birkhoff's Ergodic Theorem and the second is a consequence of the sequential compactness of $\cM$ (see \cite{KW} for details).

For completeness we now recall the notion of the classical topological pressure.   For $n\in \mathbb N$ and $\varepsilon >0$   let \begin{equation} N_{\varphi}(n,\varepsilon)=\sup \left\{\sum_{x\in F}e^{S_n\varphi(x)}\, :\, F\subset X \text{ is } (n,\varepsilon)\text{-separated}\right\}.\end{equation}
The \emph{topological pressure} with respect to the dynamical system $(X,f)$ is a mapping  
$ P_{\rm top}(f,\cdot)\colon C(X,\bR)\to \bR\cup\{\infty\}$  defined by
\begin{equation}\label{defdru}
  P_{\rm top}(\varphi) = \lim_{\varepsilon \to 0}
            \limsup_{n\to \infty}
            \frac{1}{n} \log N_{\varphi}(n,\varepsilon).
\end{equation}
The topological entropy of $f$ is defined by
$h_{\rm top}(f)=P(f,0)$. We simply write $P_{\rm top}(\varphi)$ and $h_{\rm top}$ if there is no confusion about $f$. The topological pressure is real valued if and only if the topological entropy of $f$ is finite. We use $h_{\rm top}(f)<\infty$ as a standing assumption in this paper.
The topological pressure satisfies the
well-known variational principle
\begin{equation}\label{eqvarpri}
P_{\rm top}(\varphi)=
\sup_{\mu\in \cM} \left\{h_\mu(f)+\int_X \varphi\,d\mu\right\}.
\end{equation}
Here $h_\mu(f)$ denotes the measure-theoretic entropy of $f$ with respect to $\mu$ (see~\cite{Wal:81} for details).
It is a straight forward conclusion that the supremum in~\eqref{eqvarpri} can be replaced by
the supremum taken only over all $\mu\in\cM_{\rm E}$.

%-------------------------------------------------------------------------------------------------------------------------------------------------------
\section{Localized Pressure}
%-------------------------------------------------------------------------------------------------------------------------------------------------------

Our goal is to prove the local version of the variational principle, namely $P_{\rm top}(\varphi,\Phi,w)=P_{\rm m}(\varphi,\Phi,w)$. However, in general this equality does not hold even if the potential $\varphi$ is identically zero. The following examples show that with no additional assumptions we do not have even a one-sided inequality.

\begin{example}\label{ex1} This is an example of a dynamical system where at certain points localized topological pressure is strictly less that the localized measure-theoretic pressure. We concatenate three non-overlapping one-dimensional dynamical systems such that the entropy of the outside components is greater than the entropy of the inside one. We take the potential $\Phi$ to be the identity map and $\varphi$ to be zero. Since in this case the topological pressure does not exceed the topological entropy, the affine property of the measure-theoretic pressure implies the strict inequality at the center points. What follows is the concrete construction.

Let $X=X_1\cup X_2\cup X_3$, where $X_1=[0,1]$, $X_2\subset [2,3]$, and $X_3=[4,5]$.  We define $f:X\to X$ to be the logistic type map on $X_1$ and $X_3$ given by $$f|_{X_1}(x)=4x(1-x),\,\, f|_{X_3}(x)=f|_{X_1}(x-4)+4$$
Then $h_{\rm top}(f|_{X_1})=h_{\rm top}(f|_{X_3})=\log 2$.

Whenever $f|_{X_2}$ satisfies $h_{\rm top}(f|_{X_2})<\log 2$ we will reach our conclusion.  For example, take $X_2$ to be a Cantor set in the interval $[2,3]$ and $f$ to be a homeomorphism on the Cantor set $X_2$ which is topologically conjugate to a subshift whose entropy is strictly less than $\log 2$. One possibility is the subshift with transition matrix $\left(
\begin{array}{cc}
1 & 1 \\
1 & 0 \\
\end{array}
\right)$.
We may also let $X_2=[2,3]$ and $f|_{X_2}(x)=a(x-2)(3-x)$ with $0<a<4$. In this case $h_{\rm top}(f|_{X_2})=0$.

We take the potential $\Phi$ to be the identity map on $X$. Then for any point $w\in \R_{Pt}(\Phi)\cup X_2$ we have $P_{\rm m}(0,\Phi,w)=\log 2$ since localized measure-theoretic pressure is an affine function of $w$. However, $P_{\rm top}(0,\Phi,w)\le h_{\rm top}(f|_{X_2})<\log2$. Therefore, $P_{\rm top}(0,\Phi,w)<P_{\rm m}(0,\Phi,w)$.
\end{example}
The next example will address the reverse inequality.
\begin{example}\label{ex2} Consider a decreasing sequence of disjoint compact intervals $X_n$ on the real line whose left end-points converge to $0$. We define the function $f$ on each $X_n$ to be conjugate to the logistic map $g(x)=4x(1-x)$ on $[0,1]$ and maps $X_n$ onto $X_n$. Moreover, $f(0)=0$. Then $X=\cup_{n=1}^\infty X_n \cup \{0\}$ is compact and $f$ is continuous on $X$. Moreover, for each $n$  the interval $X_n$ is invariant with respect to $f$. Since $f|_{X_n}$ is conjugate to $g(x)=4x(1-x)$ on $[0,1]$, the topological entropy of $f|_{X_n}$ is equal to the topological entropy of $g$ on $[0,1]$ and therefore is $\log 2$.

As an example of such construction consider disjoint dyadic intervals $X_n=[2^{-2n}, 2^{-2n+1}]\,(n\in\mathbb{N})$. In this case $f:X\to X$ is defined in the following way.
 $$f(x)=\left\{
                        \begin{array}{ll}
                          0, & \hbox{if $x=0$;} \\
                          2^n(x-2^{-2n})(2^{-2n+1}-x)+2^{-2n}, & \hbox{if $x\in X_n$.}
                        \end{array}
                      \right.
$$
 Take the identity potential $\Phi:X\to \mathbb{R},\quad \Phi(x)=x$.  Let $\mu_n$ be the entropy maximizing ergodic measures on $X_n$. Then $P_{\rm top}(0,\Phi,\rv(\mu_n))=\log 2$. Since $\rv(\mu_n)\to 0$, we have $P_{\rm top}(0,\Phi,0)=\log 2$. However, $x=0$ is a fixed point of $f$ and also an extreme point of $X$.
Thus, the only invariant measure $\mu$ on $X$ with $\rv(\mu)=0$ is the point-mass measure at zero. Therefore, $P_{\rm m}(0,\Phi,0)=0<P_{\rm top}(0,\Phi,0)$.
\end{example}

We say that $P_{\rm m}(\varphi,\Phi,w)$ is \emph{approximated by ergodic measures} at $w$ if there exists $(\mu_n)_{n\in\mathbb{N}}\subset\cM_E$ such that $\rv(\mu_n)\to w$ and $h_{\mu_n}(f)+\int\varphi\,d\mu_n\to P_{\rm m}(\varphi,\Phi,w)$ as $n\to\infty$. In this case we have $w\in\R_{Pt}(\Phi)$. Indeed, for $r>0$ there exists $n$ such that $\rv(\mu_n)\in D(w,\frac{r}{2})$. The ergodicity of $\mu_n$ implies the existence of $x\in X$ such that $\frac{1}{k}S_k\Phi(x)\in D(\rv(\mu),\frac{r}{2})$ for arbitrary large $k$. Therefore, $\frac{1}{k}S_k\Phi(x)\in D(w,r)$ and thus $w\in \R_{Pt}(\Phi)$.

We say that a metric space is  \emph{Besicovitch} if the Besicovitch covering theorem holds (see  \cite{F,Lo}). The next theorem is a local version of the variational principle.

\begin{theorem}\label{theovar} Let $f:X\to X$ be a continuous map on a compact metric space $X$ that is a Besicovitch space.  Let $\Phi:X\to\bR^m$ and $\varphi:X\to\bR$ be continuous
and let $w\in \R(\Phi)$ such that the map $v\mapsto P_m(\varphi,\Phi, v)$ is continuous at $w$ and $P_{\rm m}(\varphi,\Phi, w)$ is approximated by ergodic measures. Then $P_{\rm top}(\varphi,\Phi, w)=P_{\rm m}(\varphi,\Phi, w)$.
\end{theorem}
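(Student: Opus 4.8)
The strategy is to establish the two inequalities separately, with the easier direction being $P_{\rm top}(\varphi,\Phi,w)\geq P_{\rm m}(\varphi,\Phi,w)$ and the harder one being the reverse.

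For the lower bound on $P_{\rm top}$, I would exploit the hypothesis that $P_{\rm m}(\varphi,\Phi,w)$ is approximated by ergodic measures. Fix a sequence $(\mu_n)\subset\cM_E$ with $\rv(\mu_n)\to w$ and $h_{\mu_n}(f)+\int\varphi\,d\mu_n\to P_{\rm m}(\varphi,\Phi,w)$. For each fixed $\mu_n$, a standard Katok-type entropy argument (or the classical variational principle applied together with the Birkhoff/Shannon--McMillan--Breiman theorems) produces, for every $\eta>0$, a large $N$ and $(N,\varepsilon)$-separated sets of cardinality roughly $e^{N(h_{\mu_n}(f)-\eta)}$ consisting of points whose Birkhoff averages $\frac1N S_N\varphi$ are within $\eta$ of $\int\varphi\,d\mu_n$ and — crucially, using ergodicity of $\mu_n$ — whose averages $\frac1N S_N\Phi$ lie within $r$ of $\rv(\mu_n)$, hence within $2r$ of $w$ once $n$ is large. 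Summing $e^{S_N\varphi(x)}$ over such a set shows $P_{\rm top}(\varphi,\Phi,w)\geq h_{\mu_n}(f)+\int\varphi\,d\mu_n-C\eta$; letting $\eta\to0$ and $n\to\infty$ gives the desired inequality. Notably this half does not use the Besicovitch hypothesis nor continuity of $v\mapsto P_m(\varphi,\Phi,v)$.

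For the upper bound $P_{\rm top}(\varphi,\Phi,w)\leq P_{\rm m}(\varphi,\Phi,w)$ — the substantive part — I would argue by contradiction, or more constructively as follows. Given $r,\varepsilon>0$ and large $n$, take a near-optimal $(n,\varepsilon,w,r)$-set $F$, so $\sum_{x\in F}e^{S_n\varphi(x)}$ is close to $N_\varphi(n,\varepsilon,w,r)$. Form the empirical measures $\mathcal{E}_n(x)=\frac1n\sum_{k=0}^{n-1}\delta_{f^k(x)}$ and the weighted average $\sigma_n=\frac{1}{Z_n}\sum_{x\in F}e^{S_n\varphi(x)}\,\mathcal{E}_n(x)$ where $Z_n=\sum_{x\in F}e^{S_n\varphi(x)}$. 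Every $x\in F$ satisfies $\frac1n S_n\Phi(x)=\int\Phi\,d\mathcal{E}_n(x)\in D(w,r)$, so by convexity $\int\Phi\,d\sigma_n\in \overline{D(w,r)}$; passing to a weak$^*$ limit $\sigma$ along a subsequence (where the $\limsup_n$ in the definition of $P_{\rm top}$ is attained), one gets an invariant measure — standard time-averaging argument — with $\rv(\sigma)\in\overline{D(w,r)}$. The key quantitative estimate is the Misiurewicz-type bound: a partition argument shows $\frac1n\log Z_n \leq h_{\sigma_n}^{(\text{dynamical})}+\int\varphi\,d\sigma_n+o(1)$ as $\varepsilon\to0$, which in the limit yields $\lim_{\varepsilon\to0}\limsup_n \frac1n\log N_\varphi(n,\varepsilon,w,r)\leq h_\sigma(f)+\int\varphi\,d\sigma \leq P_m(\varphi,\Phi,\rv(\sigma))\leq \sup_{v\in\overline{D(w,r)}}P_m(\varphi,\Phi,v)$. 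Finally let $r\to0$: continuity of $v\mapsto P_m(\varphi,\Phi,v)$ at $w$ forces the right-hand side down to $P_m(\varphi,\Phi,w)$, completing the bound.

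The main obstacle — and where the Besicovitch hypothesis must enter — is the Misiurewicz-style entropy estimate in the previous paragraph. The classical proof of $P_{\rm top}\leq P_{\rm m}$ (as in Walters or Misiurewicz) builds a partition whose boundary has $\sigma$-measure zero and then controls $\frac1n\log(\text{number of } n\text{-strings})$ by the Kolmogorov--Sinai entropy of $\sigma$; doing this while respecting the localization constraint (keeping only orbits with $\frac1n S_n\Phi\in D(w,r)$) requires covering $X$ — or the relevant piece of it — by sets adapted to both the dynamics and the level sets of $\Phi$, and controlling multiplicities of the cover. The Besicovitch covering theorem is exactly what guarantees the bounded-multiplicity covers needed to make the counting estimate survive the localization; without it one cannot rule out a pathological blow-up in the number of distinguishable localized orbit segments relative to what the limiting measure's entropy allows. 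I would therefore organize the proof so that the covering-multiplicity lemma is isolated as a separate technical step, invoked once, with everything else following the classical template.
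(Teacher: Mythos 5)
Your overall two-inequality skeleton matches the paper, and your treatment of the upper bound $P_{\rm top}\le P_{\rm m}$ via weighted empirical measures, a weak$^\ast$ limit, and the continuity of $v\mapsto P_{\rm m}(\varphi,\Phi,v)$ at $w$ is essentially what the paper does. However, you have the role of the Besicovitch hypothesis exactly reversed. In the paper it is \emph{not} used in the upper bound at all: the standard Misiurewicz-type estimate $\limsup_n \frac1n\log\sum_{F_n} e^{S_n\varphi}\le h_\mu(f)+\int\varphi\,d\mu$ for the weak$^\ast$ limit $\mu$ of weighted empirical measures holds verbatim in any compact metric space, and the localization constraint only shrinks the class of separated sets under consideration (so it can only help this inequality). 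There is no ``covering-multiplicity'' problem in that direction, and your worry about a blow-up in the number of distinguishable localized orbit segments is unfounded.

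Where Besicovitch actually enters is the \emph{lower} bound $P_{\rm m}\le P_{\rm top}$, precisely the half you assert does not need it. The paper's argument uses He's spanning-set characterization of the free energy, $h_\mu(f)+\int\varphi\,d\mu=\lim_{\varepsilon\to0}\liminf_n\frac1n\log Q_\varphi(n,\varepsilon,\mu,\delta)$, takes a minimal $(n,\varepsilon)$-spanning set $E_n$, filters to the subcollection $\tilde E_n$ of those $x$ whose Bowen balls $B_n(x,\varepsilon)$ meet the Birkhoff basin $\cB_{n,r_0/3}(\mu)$ (so that uniform continuity of $\Phi$ forces $\frac1n S_n\Phi(x)\in D(w,r_0)$), and then must relate the spanning-set sum $\sum_{\tilde E_n}e^{S_n\varphi}$ to the localized pressure $N_\varphi(n,\varepsilon,w,r_0)$, which is defined through \emph{separated} sets. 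It is exactly here that the Besicovitch covering theorem is invoked: $\tilde E_n$ is split into at most $\beta$ (a Besicovitch constant) $(n,\varepsilon)$-separated sets, giving $\sum_{\tilde E_n}e^{S_n\varphi}\le\beta N_\varphi(n,\varepsilon,w,r_0)$. Your sketch of the lower bound quietly asserts that a Katok-type argument hands you separated sets with prescribed $\Phi$-averages directly; if you want to pursue that route you must justify how to extract separated (not spanning) sets of full weight inside the basin, which is exactly the step the paper settles with Besicovitch. As it stands, the proposal misidentifies both which inequality is delicate and which hypothesis is load-bearing where.
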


\begin{proof}
We first show that $P_{\rm top}(\varphi,\Phi, w)\le P_{\rm m}(\varphi,\Phi, w)$. Fix $\eta>0$. It follows from the definition of $P_{\rm top}(\varphi,\Phi, w)$ and the continuity of $P_{\rm m}(\varphi,\Phi, w)$ that there exist $r>0$ and $\varepsilon>0$ such that
\begin{equation}
\left|\limsup_{n\to\infty}\frac1n\log N_\varphi(n,\varepsilon, w, r)-P_{\rm top}(\varphi,\Phi, w)\right|<\frac{\eta}{2}
\end{equation}
 and for any $v\in D(w,r)\cap \R(\Phi)$ we have
\begin{equation}
\left|P_{\rm m}(\varphi,\Phi, w)-P_{\rm m}(\varphi,\Phi, v)\right|<\frac{\eta}{2}.
\end{equation}
We will now apply the method of constructing measures with large free energies which is commonly used to prove the classical variational principle.
Let $\{F_n\}_{n\in \bN}$ be $(n,\varepsilon)$ separated sets in $X$ such that $\frac1n S_n\Phi(x)\in D(w,r)$ for all $x\in F_n$ and $\sum\limits_{x\in F_n}e^{S_n\varphi(x)}>\frac12 N_{\varphi}(n,\varepsilon, w, r).$ Let $\nu_n$ be the atomic measure concentrated on $F_n$ given by the formula
\begin{equation}
\nu_n=\left(\sum\limits_{x\in F_n}e^{S_n\varphi(x)}\right)^{-1}\sum\limits_{x\in F_n}e^{S_n\varphi(x)}\delta_x,
\end{equation}
where $\delta_x$ denotes the Dirac measure supported on $x$. Consider a sequence of measures $\mu_n=\frac1n\sum\limits_{k=0}^{n-1}\nu_n\circ f^{-k}$ and let $\mu$ be a weak$^*$ accumulation point of $(\mu_n)$. Then (see \cite{Wal:81} or \cite[Section 4.5]{KH}) $\mu$ is $f$-invariant and satisfies
\begin{equation}
\limsup_{n\to\infty}\frac1n\log\sum\limits_{x\in F_n}e^{S_n\varphi(x)}\le h_\mu(f)+\int_X\varphi\,d\mu.
\end{equation}
We conclude that
\begin{equation}\begin{aligned} P_{\rm top}(\varphi,\Phi, w) & \le \limsup_{n\to\infty}\frac1n\log N_\varphi(n,\varepsilon, w, r)+\frac{\eta}{2}\\
&=\limsup_{n\to\infty}\frac1n\log\sum\limits_{x\in F_n}e^{S_n\varphi(x)}+\frac{\eta}{2}\\
&\le P_{\rm m}(\varphi,\Phi,\rv(\mu))+\frac{\eta}{2}.
\end{aligned}\end{equation}
Note that $\rv(\mu)\in D(r,w)$ by the construction of $\mu$. Therefore, $ P_{\rm top}(\varphi,\Phi, w)\le  P_{\rm m}(\varphi,\Phi,w)+\eta$. Since $\eta$ was arbitrary, we obtain the desired inequality $ P_{\rm top}(\varphi,\Phi, w)\le  P_{\rm m}(\varphi,\Phi,w)$.

Now we turn our attention to the opposite inequality. Let $\eta>0$ be arbitrary. As before, we fix $r_0>0$ and $\varepsilon_0>0$ such that for any $0<\varepsilon<\varepsilon_0$
\begin{equation}\label{InitialEst}
\left|\limsup_{n\to\infty}\frac1n\log N_\varphi(n,\varepsilon, w, r_0)-P_{\rm top}(\varphi,\Phi, w)\right|<\frac{\eta}{2}.
\end{equation}
Since $\Phi$ is uniformly continuous on $X$ we may assume that $\varepsilon_0$ is chosen small enough so that for any $n\in \bN$ and $x_1,x_2\in X$ with $d_n(x_1,x_2)<\varepsilon_0$ we have
\begin{equation}\label{UnifCont}
\left|\frac1n S_n\Phi(x_1)-\frac1n S_n\Phi(x_2)\right|\le \frac{r_0}{3}.
\end{equation}
Since $P_{\rm m}(\varphi, \Phi, w)$ is approximated by ergodic measures, there exists $\mu\in\cM_E$ such that
\begin{equation}\label{ErgMeas}
|\rv(\mu)-w|<\frac{r_0}{3}\quad \text{and}\quad P_{\rm m}(\varphi,\Phi,w)-\frac{\eta}{4}<h_\mu(f)+\int\varphi\, d\mu.
\end{equation}
There is a generalization of Katok's characterization of the measure-theoretic entropy in terms of ergodic measures to the concept of topological pressure derived in \cite{He}. See \cite{Kat:80} for the original approach. We are using the following set up:  Fix $0<\delta<1$. We say that $E$ is an $(n,\varepsilon)$-spanning set for $Y\subset X$ if $Y\subset\cup_{x\in E}B_n(x,\varepsilon)$.  Denote by $Q_\varphi(n,\varepsilon, \mu, \delta)=\inf\left\{\sum_{x\in E}e^{S_n\varphi(x)}\right\}$, where the infimum is taken over all $(n,\varepsilon)$-spanning sets $E$ of a set of $\mu$-measure more than or equal to $1-\delta$. Then
\begin{equation}\label{He}
h_\mu(f)+\int\varphi\, d\mu=\lim_{\varepsilon\to 0}\liminf_{n\to\infty}\frac{1}{n}\log Q_\varphi(n, \varepsilon, \mu, \delta).
\end{equation}
There exists a decreasing sequence of strictly positive numbers $\varepsilon_i<\varepsilon_0,\,(i\in \bN)$ with $\lim_{i\to 0}\epsilon_i=0$ and corresponding sequences of $(n,\varepsilon_i)$-spanning sets $E_n(\varepsilon_i)\,(n\in\bN)$ such that
\begin{equation}
\sum_{x\in E_n(\varepsilon_i)}e^{S_n\varphi(x)}<2Q_\varphi(n,\varepsilon_i, \mu, \delta)
\end{equation}
  and
\begin{equation}\label{UpperPr}\left(h_\mu(f)+\int\varphi\, d\mu\right)-\frac{\eta}{4}<\liminf_{n\to\infty}\frac1n\log\sum_{x\in E_n(\varepsilon_i)}e^{S_n\varphi(x)}.
\end{equation}
We may assume that each $E_n(\varepsilon_i)$ is a minimal spanning set with respect to the inclusion.
Since $\mu$ is ergodic, the basin of $\mu$  defined by
\begin{equation}
\cB(\mu)=\left\{x\in X : \frac1n \sum_{k=1}^{n-1}\delta_{f^k(x)}\to \mu \text{ as }n\to\infty\right\}
\end{equation}
 is a set of full $\mu$-measure by Birkhoff's Ergodic Theorem. We define
\begin{equation}
\cB_{n,\frac{r_0}{3}}(\mu)=\left\{x\in \cB(\mu): \left|\frac{1}{l}S_l\Phi(x)-\rv(\mu)\right|<\frac{r_0}{3}\text{ for all }l\ge n\right\}.
\end{equation}
Since $(\cB_{n,\frac{r_0}{3}}(\mu))_{n\in \bN}$ is an increasing sequence of Borel sets whose union is a set of full $\mu$-measure, we conclude that $\lim\limits_{n\to \infty}\mu(\cB_{n,\frac{r_0}{3}}(\mu))=1$. Consider the sequence of sets
\begin{equation}
\tilde{E}_n(\varepsilon_i)=\left\{x\in E_n(\varepsilon_i): B_n(x,\varepsilon_i)\cap\cB_{n,\frac{r_0}{3}}(\mu)\ne\emptyset\right\}.
\end{equation}
It follows from (\ref{UnifCont}) and (\ref{ErgMeas}) that for any $x\in \tilde{E}_n(\epsilon_i)$ we have $\frac1n S_n\Phi(x)\in D(w,r_0)$.
When $n$ is sufficiently large, $\tilde{E}_n(\varepsilon_i)$ is a spanning set for a set of $\mu$-measure greater than $1-\delta'$ where $\delta<\delta'<1$. Therefore,
\begin{equation}
Q_{\varphi}(n, \varepsilon_i,\mu, \delta')\le\sum_{x\in \tilde{E}_n(\varepsilon_i)}e^{S_n\varphi(x)}\le \sum_{x\in E_n(\varepsilon_i)}e^{S_n\varphi(x)}<2Q_\varphi(n,\varepsilon_i,\mu,\delta).
\end{equation}
It follows from the fact that (\ref{He}) also holds for $\delta'$ that  for $\varepsilon_i$ small enough (\ref{UpperPr}) remains true when we replace $E_n(\varepsilon_i)$ by $\tilde{E}_n(\varepsilon_i)$. %Denote such $\varepsilon_i$ by $\tilde{\varepsilon}$.

Let $\beta$ be a Besicovitch constant of $X$. Note that this constant can be chosen independently of the metrics $d_n$ since they are decreasing in $n$. It follows from the Besicovitch covering theorem and the fact that $\tilde{E}_n(\varepsilon_i)$ is minimal that $\tilde{E}_n(\varepsilon_i)=\cup_{k=1}^\beta F_n^k(\varepsilon_i)$ where each $F_n^k(\varepsilon_i)$ is an $(n,\varepsilon_i)$-separated set. We conclude that
\begin{equation}\label{eqadd1}
 \sum_{x\in \tilde{E}_n(\varepsilon_i)}e^{S_n\varphi(x)}\le\beta\sup_{1\le k\le n}\left\{\sum_{x\in F_n^k(\varepsilon_i)}e^{S_n\varphi(x)}\right\}\le \beta N_\varphi(n,\varepsilon_i, w, r_0).
\end{equation}
Combining inequality \eqref{eqadd1} with (\ref{InitialEst}), (\ref{ErgMeas}) and (\ref{UpperPr}) we obtain
\begin{equation}
P_{\rm m}(\varphi,\Phi,w)-\frac{\eta}{2}<P_{\rm top}(\varphi,\Phi,w)+\frac{\eta}{2}.
\end{equation}
 Since $\eta$ was arbitrary, this concludes the proof of the theorem.
\end{proof}
Note that the left hand side inequality between the topological and measure-theoretic localized pressures was proven under milder assumptions. More precisely, we have the following.
\begin{corollary}
Let $f:X\to X$ be a continuous map on a compact metric space $X$, let $\Phi:X\to\bR^m$ and $\varphi:X\to\bR$ be continuous
and let $w\in \R_{Pt}(\Phi)$ such that the map $v\mapsto P_m(\varphi,\Phi, v)$ is continuous at $w$. Then $P_{\rm top}(\varphi,\Phi, w)\le P_{\rm m}(\varphi,\Phi, w)$.
\end{corollary}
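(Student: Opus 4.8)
The inequality to be proved is precisely the one established in the first half of the proof of Theorem~\ref{theovar}, so the plan is to rerun that argument while verifying that the hypotheses listed here suffice. First observe that $\R_{Pt}(\Phi)\subset\R(\Phi)$, so $w$ lies in the domain of $v\mapsto P_{\rm m}(\varphi,\Phi,v)$ and the continuity hypothesis is meaningful. Since every $(n,\varepsilon,w,r)$-set is in particular $(n,\varepsilon)$-separated, $N_\varphi(n,\varepsilon,w,r)\le N_\varphi(n,\varepsilon)$ for all $n,\varepsilon,r$, whence $P_{\rm top}(\varphi,\Phi,w)\le P_{\rm top}(\varphi)<\infty$ by the standing assumption $h_{\rm top}(f)<\infty$; and if $P_{\rm top}(\varphi,\Phi,w)=-\infty$ there is nothing to prove, so we may assume it is a finite real number.

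Fix $\eta>0$. Exactly as in the proof of Theorem~\ref{theovar}, the definition of $P_{\rm top}(\varphi,\Phi,w)$ as an iterated limit together with continuity of $v\mapsto P_{\rm m}(\varphi,\Phi,v)$ at $w$ lets us choose $r>0$ and $\varepsilon>0$ so small that
\[
\Bigl|\limsup_{n\to\infty}\tfrac1n\log N_\varphi(n,\varepsilon,w,r)-P_{\rm top}(\varphi,\Phi,w)\Bigr|<\tfrac\eta2
\quad\text{and}\quad
\bigl|P_{\rm m}(\varphi,\Phi,w)-P_{\rm m}(\varphi,\Phi,v)\bigr|<\tfrac\eta2
\]
for every $v\in\overline{D(w,r)}\cap\R(\Phi)$; here we use that continuity at $w$ controls $P_{\rm m}$ on a full neighborhood of $w$, so shrinking $r$ is harmless. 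The role of the assumption $w\in\R_{Pt}(\Phi)$ (rather than merely $w\in\R(\Phi)$) is to guarantee that the index set $\Lambda=\{n\in\bN:\,\exists\,x\in X,\ \tfrac1nS_n\Phi(x)\in D(w,r)\}$ is infinite, since a singleton $\{x\}$ with $\tfrac1nS_n\Phi(x)\in D(w,r)$ is an $(n,\varepsilon,w,r)$-set for every $\varepsilon$. For $n\notin\Lambda$ one has $N_\varphi(n,\varepsilon,w,r)=0$, so the $\limsup$ above is really a $\limsup$ along $\Lambda$, and it is along $\Lambda$ that we carry out the measure construction.

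For each $n\in\Lambda$ choose an $(n,\varepsilon)$-separated set $F_n$ with $\tfrac1nS_n\Phi(x)\in D(w,r)$ for all $x\in F_n$ and $\sum_{x\in F_n}e^{S_n\varphi(x)}>\tfrac12 N_\varphi(n,\varepsilon,w,r)$, and set
\[
\nu_n=\Bigl(\sum_{x\in F_n}e^{S_n\varphi(x)}\Bigr)^{-1}\sum_{x\in F_n}e^{S_n\varphi(x)}\delta_x,\qquad
\mu_n=\tfrac1n\sum_{k=0}^{n-1}\nu_n\circ f^{-k}.
\]
Pass to a subsequence $n_j\in\Lambda$ along which $\mu_{n_j}\to\mu$ weak$^*$ and $\tfrac1{n_j}\log N_\varphi(n_j,\varepsilon,w,r)\to\limsup_{n\to\infty}\tfrac1n\log N_\varphi(n,\varepsilon,w,r)$. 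The standard argument behind the variational principle (as recalled in the proof of Theorem~\ref{theovar}; see also \cite{Wal:81}) shows that $\mu$ is $f$-invariant and $\limsup_j\tfrac1{n_j}\log\sum_{x\in F_{n_j}}e^{S_{n_j}\varphi(x)}\le h_\mu(f)+\int_X\varphi\,d\mu$. Moreover $\rv(\mu_n)=\int\tfrac1nS_n\Phi\,d\nu_n$ is a convex combination of the vectors $\tfrac1nS_n\Phi(x)\in D(w,r)$, $x\in F_n$, so $\rv(\mu_n)\in D(w,r)$ and hence $\rv(\mu)\in\overline{D(w,r)}$; and $\rv(\mu)\in\R(\Phi)$ automatically, since $\mu\in\cM$. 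Combining these facts with the choice of $r,\varepsilon$ and the crude bound $N_\varphi(n_j,\varepsilon,w,r)<2\sum_{x\in F_{n_j}}e^{S_{n_j}\varphi(x)}$ yields
\[
P_{\rm top}(\varphi,\Phi,w)\ \le\ h_\mu(f)+\int_X\varphi\,d\mu+\tfrac\eta2\ \le\ P_{\rm m}(\varphi,\Phi,\rv(\mu))+\tfrac\eta2\ \le\ P_{\rm m}(\varphi,\Phi,w)+\eta,
\]
and letting $\eta\to0$ finishes the proof.

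This involves no new idea beyond that of Theorem~\ref{theovar}; the only points requiring a little care are (i) that $N_\varphi(n,\varepsilon,w,r)$ can vanish for individual $n$, which is exactly why $w\in\R_{Pt}(\Phi)$ is the natural hypothesis here and why one works along the subsequence $\Lambda$, and (ii) that the measure construction only produces $\rv(\mu)$ in the \emph{closed} ball $\overline{D(w,r)}$, which is accommodated by demanding the continuity estimate on $\overline{D(w,r)}\cap\R(\Phi)$ rather than on the open ball.
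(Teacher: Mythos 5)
Your proof is correct and follows essentially the same route as the paper, which simply observes that the inequality $P_{\rm top}(\varphi,\Phi,w)\le P_{\rm m}(\varphi,\Phi,w)$ is exactly the first half of the proof of Theorem~\ref{theovar}, a half that never uses the ergodic-approximation hypothesis. Your two extra remarks — working along the subsequence $\Lambda$ where $(n,\varepsilon,w,r)$-sets are nonempty (which is precisely what $w\in\R_{Pt}(\Phi)$ buys), and noting that the accumulation measure only lands in the \emph{closed} ball $\overline{D(w,r)}$ — are sensible tightenings of the paper's slightly terse exposition, but they do not constitute a different argument.
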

\begin{remarks}
 {\rm (i) }Note that whenever $w\in \R_{Pt}(\Phi)\cap X_2$ in Example \ref{ex1}  then $P_{\rm m}(0,\Phi,w)$ cannot be approximated by ergodic measures. Indeed, for any $\mu\in \cM_E$ with $w=\rv(\mu)\in X_2$ we have $\mu(X_1)=\mu(X_3)=0$; thus, $h_\mu(f)\le h_{\rm top}(f|_{X_2})<\log 2$ follows from the variational principle.\\
 {\rm (ii) } In Example \ref{ex2}  we  observe that the function $w\mapsto P_m(0,\Phi, w)$ is not continuous at $w=0$. We have $\rv(\mu_n)\to 0, \,\,P_m(0,\Phi, \rv(\mu_n))=\log 2$ and $P_m(0,\Phi, 0)=0$.

\end{remarks}

\section{equilibrium states}
Let $f:X\to X$ be a continuous map on a compact metric space
and let $\Phi=(\phi_1,\ldots,\phi_m)\in C(X,\bR^m)$.  Fix $w\in\R(\Phi)$.
We recall the definition of  $\mu\in \cM_\Phi(w)$ being a localized equilibrium state of $\varphi\in C(X,\bR)$ with respect to $\Phi$ and $w$ in \eqref{defeqsta}.

We say that the entropy map is upper semi-continuous at $w\in \R(\Phi)$ if for every $(\mu_n)_n\subset \cM$ with $\rv(\mu_n)\to w$ and every accumulation point $\mu$ of $(\mu_n)_n$ we have $\limsup_{n\to \infty} h_{\mu_n}(f)\leq h_{\mu}(f)$.
Note that if the entropy map is upper semi-continuous at $w$ then there exists for each $\varphi\in C(X,\bR)$ at least one localized equilibrium state of $\varphi$. The following example shows that the existence of a localized equilibrium state does in general not imply the existence of an ergodic localized equilibrium state. This differs from the theory of classical equilibrium states where the existence of an equilibrium state always guarantees the existence of an ergodic equilibrium state (see \cite{Wal:81}).

\begin{example}\label{ex3}
Let $a,b,c,d\in \bR$ with $a<b<c<d$. Let $X=[a,b]\cup [c,d]$ and $f:X\to X$ be a continuous transformation with an upper semi-continuous entropy map $\mu\mapsto h_\mu(f)$ such that $f([a,b])\subset [a,b]$ and $f([c,d])\subset [c,d]$.
Moreover, we assume that $f(a)=a$, $f(d)=d$, and $h_{\rm top}(f|_{[a,b]})=h_{\rm top}(f|_{[c,d]})\ne 0$. Consider the potentials $\Phi={\rm id}_X$ and $\varphi\equiv 0$. Since $\delta_a,\delta_d\in \cM$ the  convexity of $\R(\Phi)$ implies $\R(\Phi)=[a,d]$. Any $w\in (b,c)$ can be written as $w=\alpha \rv(\mu_1)+(1-\alpha)\rv(\mu_2)$, where $\alpha\in(0,1)$ and $\mu_1,\mu_2$ are ergodic entropy maximizing measures on $[a,b]$ and $[c,d]$ respectively. It follows that the measure $\mu=\alpha\mu_1+(1-\alpha)\mu_2$ is a localized equilibrium state of $\varphi$ with respect to $\Phi$ and $w$. However, the set $\cM_\Phi(w)$ does not contain any ergodic measure.
\end{example}

We will see that even in the case of systems satisfying the strongest possible existence and uniqueness results for
classical equilibrium states the situation for localized equilibrium states is rather different.
We now introduce the class of systems with strong thermodynamic properties.

\subsection{Systems with strong thermodynamic properties}

 We say $f:X\to X$ has strong thermodynamic properties (which we abbreviate by \STP) if the following conditions hold:
\begin{enumerate}
\item $h_{\rm top}(f)<\infty$;
\item  The entropy map $\mu\mapsto h_\mu (f)$ is upper semi-continuous;
\item  The map $\varphi\mapsto  P_{\rm top}(f,\varphi)$ is real-analytic on $C^\alpha(X,\bR)$;
\item \label{ref4} Each  potential $\varphi \in C^\alpha(X,\bR)$ has a
unique equilibrium measure $\mu_\varphi$ such that $P(\varphi)=h_{\mu_{\varphi}}(f)+\int\varphi\,d\mu_{\varphi}$. Furthermore,
$\mu_\varphi$ is ergodic and given $\psi\in C^\alpha(X,\bR)$ we have
\begin{equation}\label{eqdifpre}
\frac{d}{dt} P_{\rm top}(f,\varphi + t\psi )\Big|_{t=0}= \int_X \psi
\,d\mu_\varphi.
\end{equation}
\item  For each $\varphi$, $\psi\in C^\alpha(X,\bR)$ we have
$\mu_\varphi=\mu_\psi$ if and only if $\varphi-\psi$ is cohomologous
to a constant.
\item \label{ref5} For each $\varphi$, $\psi\in C^\alpha(X,\bR)$ and
$t\in\bR$ we have
\begin{equation}\label{gg33}
\frac{d^2}{dt^2} P_{\rm top}(f,\varphi + t\psi )\ge 0,
\end{equation}
with equality if and only if $\psi$ is cohomologous to a constant.

\end{enumerate}
Note that for several classes of systems properties (3)-(6) hold even for a wider class of potentials, namely for potentials with summable variation (see for example \cite{Je}). For simplicity, we restrict our
considerations to H\"older continuous potentials.

Some examples of systems with strong thermodynamic properties are expansive homeomorphisms with specification which include topological mixing two-sided subshifts of finite type as well as diffeomorphisms with a locally maximal topological mixing hyperbolic set, see \cite{Bo2,HRu, KH, Ru}. We note that in all these examples the measure $\mu_\varphi$ in property (4) is a Gibbs measure.
Next, we introduce some concepts about shift maps  that will be used later on.

Let $d\in \bN$ and let $\cA=\{0,\cdots,d-1\}$ be a finite alphabet in $d$ symbols. The (one-sided) shift space $X$ on the alphabet $\cA$ is the set of
all sequences $x=(x_n)_{n=1}^\infty$ where $x_n\in \cA$ for all $n\in \bN$.  We endow $X$ with the Tychonov product topology
which makes $X$ a compact metrizable space. For example, given $0<\alpha<1$ it is easy to see that
\begin{equation}\label{defmet}
d(x,y)=d_\alpha(x,y)=\alpha^{\inf\{n\in \bN:\  x_n\not=y_n\}}
\end{equation}
defines a metric which induces the Tychonov product topology on $X$.
The shift map $f:X\to X$ (defined by $f(x)_n=x_{n+1}$) is a continuous $d$ to $1$ map on $X$.
If $Y\subset X$ is an $f$-invariant set we  say  that $f|_Y$ is a sub-shift. In particular, for  a $d\times d$ matrix $A$ with values in $\{0,1\}$
we define $X_A=\{x\in X: A_{x_n,x_{n+1}}=1\}$. It is easy to see that $X_A$ is a closed (and therefore compact) $f$-invariant set and we say that $f|_{X_A}$ is a subshift of finite type. A subshift of finite type is (topologically) mixing if $A$ is aperiodic, that is, if there exists $n\in \bN$ such that $A^n_{i,j}>0$ for all $i,j\in \cA$.

Analogously, we obtain the concept of two-sided shift spaces and shift maps by defining $X$ to be the space of all bi-infinite sequences $x=(x_n)_{n=-\infty}^\infty$ where $x_n\in \cA$ for all $n\in \bZ$. It is a well-known fact that topological mixing sub-shifts of finite type have strong thermodynamic properties (see \cite{Ru}).

\subsection{Interior and boundary equilibrium states}
From now on we assume that $f$ has strong thermodynamic properties, $\Phi: X\to\bR^m$ and $\dim \R(\Phi)=m$.  Recall that if $\Phi$ is H\"older continuous then  $\dim \R(\Phi)=m$  is equivalent to the condition that no non-trivial linear
combination $t\cdot \Phi= t_1\phi_1+ \ldots + t_m \phi_m$ is cohomologous to a constant. For $A\subset \bR^l$ we define the relative interior of $A$ (denoted by ${\rm ri }\ A$) as the interior of $A$ considered as a subset of the smallest affine subspace of $\bR^l$  containing $A$. In particular, if $A$ has non-empty interior then the relative interior and the interior of $A$ coincide.

\begin{definition}\label{defintbo} Suppose $\mu\in \cM_\Phi(w)$ is a localized equilibrium state of $\varphi\in C(X,\bR)$ with respect to $\Phi$ and $w$. We say that $\mu$ is an interior equilibrium state if $(\int \varphi d\mu,w)\in {\rm ri}\ \R(\varphi,\Phi)$.  Otherwise, call $\mu$
a localized  equilibrium state at the boundary.
\end{definition}

We note that  $\dim  \R(\varphi,\Phi)=m$ if and only if either $\varphi$ is cohomolous to a constant or $\varphi$ is cohomologous to some nontrivial linear combination of $\Phi$. In this situation we say that a localized equilibrium state of $\varphi$ with respect to $\Phi$ and $w$ is a localized measure of maximal entropy at $ w$.

The following example shows that localized equilibrium states at the boundary are in general not unique.

\begin{example}\label{ex4}
Let $f:X\to X$ be the one-sided full shift  with alphabet $\{0,1,2,3\}$.
Let $C$ be a compact and convex subset of $\bR^2$ whose boundary $\partial C$ is a strictly convex Jordan curve. Pick any point $w_\infty\in\partial C$. Then there exists a line passing through $w_\infty$ which does not intersect ${\rm int}\, C$, but its orthogonal line does. Let $w_0$ be any point in ${\rm int}\, C$ on that orthogonal line and let $v_1,v_2$ be points on $\partial C$ on opposite sides with respect to the line. Denote by $l_1, l_2$ the arcs in $\partial C$ joining $v_1, v_2$ and $w_\infty$. For $i=1,2$ we pick a strictly unidirectional sequence $(v_i(k))_{k\in \bN}\subset l_i$ starting at $v_i$ and going towards $w_\infty$.  We require that $v_i(1)=v_i$ and  $|v_i(k)-w_\infty|<1/2^k$ for all $k>1$, in particular $\lim_{k\to \infty} v_i(k)=w_\infty$.

Next, we define several subsets of $X$.
Let $S_1=\{0,1\}, S_{2}=\{2,3\}$ and fix $ \alpha\in \bN, \alpha\geq 3$.
For $i=1,2$ and all $k\geq \alpha$ we define $Y_i(k)=\{x \in X: x_1,\ldots, x_{k}\in S_i\}$. Moreover, let $Y_0(\alpha)= X\setminus (Y_1(\alpha)\cup  Y_{2}(\alpha))$.
\\[0.2cm]
\noindent
Finally, we define a potential $\Phi: X\to \bR^2$ by
\begin{equation}\label{defpotphi}
\Phi(x)=\begin{cases}
w_0\qquad & {\rm if}\,\,   x\in Y_0(\alpha)\\
                         v_{i}(k-\alpha) & {\rm if}\,\,  x\in Y_{i}(k-1)\ {\rm and}\ x\not\in Y_i(k),\, k> \alpha\\
                          w_\infty & {\rm if}\,\,   \  x\in Y_{i}(k)\  {\rm for\, all }\, k\, {\rm for\ some}\ i\in \{1,2\}
            \end{cases}
\end{equation}
Note that $\Phi(x)=w_\infty$ if and only if either $x_k\in \{0,1\}$ for all $k\in \bN$ or $x_k\in \{2,3\}$ for all $k\in \bN$, in particular $f|_{\Phi^{-1}(w_\infty)}$ is a subshift finite type $f_{\rm A}$
with transition matrix \begin{equation}\label{matA}
 {\rm A} = \left| \begin{array}{cccc}
1& 1 & 0 & 0 \\
1&1&0&0  \\
0&0&1&1\\
0&0&1&1\end{array} \right|.
\end{equation}
\end{example}

To illustrate this example we consider a case where the set $C$ and the sets of points $v_1(k)$, $v_2(k)$ are symmetric about the line through $w_{\infty}$ and $w_0$. We denote by $w_i(j)$ the rotation vectors of the periodic orbits of length $j$ whose generators have the first $j-1$ coordinates in  $S_i$ and the $j^{\text{th}}$ coordinate in the complementary alphabet $S_{3-i}$. Precisely, for $j>\alpha$ and $i=1,2$ we have

\begin{equation}\label{wj} w_i(j)=\frac{\sum\limits_{k=1}^{j-\alpha}v_i(k)+\alpha w_0}{j}.\end{equation}

We show that in this case the boundary of $\R(\Phi)$ is the infinite polygon. Moreover, there is a neighborhood of $w_\infty$ where the vertices of $\R(\Phi)$ are exactly $w_i(j)$, $i=1,2$ and $j>j_0$ for some integer $j_0$ which depends on properties $\partial C$. We prove this fact in the next proposition by introducing a new coordinate system with the origin at $w_\infty$ and and the $x$-axis passing through $w_0$. For a given point $a\in\bR^2$ we write ${\rm pr}_x(a)$ and ${\rm pr}_y(a)$ for the $x$ and $y$ coordinates respectively.  Let $y=l(x)$ denote the parametrized boundary curve of the upper half of $C$. By symmetry $y=-l(x)$ coincides with the boundary curve of the lower half. For simplicity we add an additional assumption on $l(x)$ that guaranties that $j_0=\alpha$.

\begin{proposition}\label{prop1}
Suppose that $\{x_k\}\subset [0,1]$ is a decreasing sequence such that $x_k\le\frac{1}{2^{k}}$, $l:[0,1]\mapsto \bR$ is an increasing and strictly convex function such that $l(0)=0$ and $l(x_1)>(\alpha+1)l(x_2)$. Let $w_0$ be the midpoint between $x_1$ and $x_2$. For $i=1,2$ denote by $v_i(k)=(x_k, (-1)^il(x_k))$, let $w_i(j)$ be as in (\ref{wj}) for $j>\alpha$ and set $w_i(\alpha)=\left(\frac{(\alpha-1){\rm pr}_x(w_0)+x_1}{\alpha},(-1)^i\frac{l(x_1)}{3\alpha}\right)$. Then for the potential $\Phi$ defined in Example \ref{ex4} we have 
\begin{equation}
\R(\Phi)=\overline{\rm Conv}\{w_i(j): j\ge\alpha,\,i=1,2\}.
\end{equation}
\end{proposition}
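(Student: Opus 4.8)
The plan is to reduce the computation of $\R(\Phi)$ to a one-parameter family of periodic orbits and then match that family against the polygon $\overline{\rm Conv}\{w_i(j):j\ge\alpha\}$. The first step is to replace $\Phi$ by a single combinatorial statistic: for $x\in X$ let $r(x)\in\bN\cup\{\infty\}$ be the length of the maximal initial block of coordinates of $x$ lying in one of the two alphabets $S_1,S_2$ (the alphabet being fixed by $x_1$). Reading off \eqref{defpotphi}, $\Phi(x)=w_\infty$ if $r(x)=\infty$, $\Phi(x)=w_0$ if $r(x)<\alpha$, and $\Phi(x)=v_i\bigl(r(x)-\alpha+1\bigr)$ if $\alpha\le r(x)<\infty$. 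Thus if the coding of $x$ begins with a maximal block of length $\ell$ from $S_i$, the first $\ell$ iterates of $x$ see the $\Phi$-values $v_i(\ell-\alpha+1),\dots,v_i(1),w_0,\dots,w_0$ (with $\alpha-1$ copies of $w_0$) when $\ell\ge\alpha$, and $\ell$ copies of $w_0$ when $\ell<\alpha$. Since $f$ is the full shift it has specification, so periodic orbit measures are weak$^*$ dense in $\cM$; as $\R(\Phi)=\rv(\cM)$ is compact and convex and $\rv$ is affine and continuous, it suffices to control rotation vectors of periodic points and to show each $w_i(j)$ lies in $\R(\Phi)$. A periodic point parses cyclically into maximal blocks alternating between $S_1$ and $S_2$ (a periodic point lying entirely in one $S_i$ has $\Phi\equiv w_\infty=\lim_j w_i(j)$, which is in the closed hull), and by the description above its Birkhoff sum over one period equals $\sum_m C(B_m)$ with the per-block vector $C(B)$ depending only on the side and length of $B$. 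Grouping the blocks into consecutive opposite-side pairs exhibits the rotation vector as a convex combination of the vectors $\rho(a,b)$, where $\rho(a,b)$ is the rotation vector of the orbit of period $a+b$ whose coding is a block of length $a$ from $S_1$ followed by a block of length $b$ from $S_2$. Hence $\R(\Phi)=\overline{\rm Conv}\{\rho(a,b):a,b\ge1\}$, and the proposition reduces to the identity $\overline{\rm Conv}\{\rho(a,b)\}=\overline{\rm Conv}\{w_i(j):j\ge\alpha\}$.

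For the inclusion $\supseteq$ I would compute $C_i(\ell)=(\ell+1)w_i(\ell+1)-w_0$ for $\ell\ge\alpha$ and $C_i(\ell)=\ell\,w_0$ for $\ell<\alpha$; this yields $\rho(j-1,1)=w_1(j)$ and $\rho(1,j-1)=w_2(j)$ for all $j\ge\alpha+1$ (which simultaneously checks formula \eqref{wj}) and $\rho(a,b)=w_0$ for $a,b<\alpha$, so $\{w_i(j):j\ge\alpha+1\}\cup\{w_0\}\subset\R(\Phi)$. The midpoint choice of $w_0$ enters only through the identity $w_0=\tfrac12\bigl(w_1(\alpha+2)+w_2(\alpha+2)\bigr)$, which follows from ${\rm pr}_y(w_0)=0$, $x_1+x_2=2\,{\rm pr}_x(w_0)$ and $v_1(k)+v_2(k)=(2x_k,0)$; so $w_0$ already lies in the polygon. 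Finally $\rho(\alpha,\alpha)=\tfrac12\bigl(w_1(\alpha)+w_2(\alpha)\bigr)$, which places the extra vertices $w_i(\alpha)$ at the $a=b=\alpha$ level; it remains to realize $w_i(\alpha)$ itself as the rotation vector of an ergodic measure leaning to side $i$, and I expect this to require a genuinely new construction (a suitable Markov measure, or a limit of periodic orbits obtained by perturbing $(0^\alpha 2^\alpha)^\infty$).

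For the inclusion $\subseteq$ I would show each $\rho(a,b)$ lies in the polygon. By symmetry take $a\ge b$. If $a,b<\alpha$ then $\rho(a,b)=w_0$; if $a\ge\alpha>b$ then $\rho(a,b)=\tfrac{a+1}{a+b}w_1(a+1)+\tfrac{b-1}{a+b}w_0$ is a genuine convex combination. The delicate case is $a,b\ge\alpha$, where $\rho(a,b)=\tfrac1{a+b}\bigl((a+1)w_1(a+1)+(b+1)w_2(b+1)-2w_0\bigr)$ carries a ``negative weight'' on $w_0$ that must be absorbed. Here the hypotheses on $l$ are essential: strict convexity of $l$ with $l(0)=0$, together with $x_k\le 2^{-k}$ and $l(x_1)>(\alpha+1)l(x_2)$, force the points $w_i(\alpha),w_i(\alpha+1),\dots,w_\infty$ to be the successive vertices of a convex arc on each side, with $w_i(\alpha)$ protruding beyond the chord $[w_0,w_i(\alpha+1)]$. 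The cleanest way to finish is a supporting-line argument: for every $t\in\bR^2$ one checks that $v\mapsto t\cdot v$ is maximized over $\{\rho(a,b)\}$ at some $w_i(j)$ (or at $\tfrac12(w_1(\alpha)+w_2(\alpha))$), using that a block of length exactly $\alpha$ maximizes the per-block average of ${\rm pr}_x\circ\Phi$ while the ${\rm pr}_y$-gain from lengthening a block is controlled by the convex, rapidly decreasing sequence $l(x_k)$, so that the extremal configuration is always one long block against a length $1$ block, or two length $\alpha$ blocks. Convexity of $\R(\Phi)$ then gives $\R(\Phi)\subset\overline{\rm Conv}\{w_i(j):j\ge\alpha\}$.

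The main obstacles are precisely the two-long-block case in the last step and the realization of $w_i(\alpha)$ as a rotation vector; in both, the quantitative hypothesis $l(x_1)>(\alpha+1)l(x_2)$ (with the strict convexity of $l$) is what prevents $\R(\Phi)$ from acquiring extra extreme points between $w_0$ and the first ``honest'' vertex $w_i(\alpha+1)$ on each side.
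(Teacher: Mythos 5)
Your overall strategy --- reduce to periodic-orbit rotation vectors via Sigmund, decompose $S_n\Phi$ block by block, group into opposite-side pairs $\rho(a,b)$ --- parallels the paper's, and your \emph{cyclic} pairing is in fact the more careful version. The paper instead partitions the period word $(\xi_1,\ldots,\xi_n)$ linearly into $k$ maximal blocks and applies the per-block formula to each, treating $k=2,3$ as basic cases. But $\Phi(f^p x)$ depends on the maximal \emph{forward} run of $f^p x$, which for $p$ in the last linear block spills over into the first block whenever the two lie in the same $S_i$, i.e.\ whenever $k$ is odd; the correct decomposition is by cyclic blocks, which is exactly what your pairing into $\rho(a,b)$ encodes. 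Your $\subseteq$ direction, though, is only a sketch: you defer the two-long-block case to a supporting-line check you never carry out, whereas the paper does an explicit coordinatewise comparison of $\rv(x)$ against $w_1(n)$ and $\frac{n_1}{n}w_1(n_1)+\frac{n_2}{n}w_1(n_2)$. The two ideas are of the same type and either can likely be completed.

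The substantive issue is the gap you correctly flag and leave open: realizing $w_i(\alpha)\in\R(\Phi)$. You should pursue this, because it cannot be closed. With your reduction $\R(\Phi)=\overline{\rm Conv}\{\rho(a,b):a,b\ge1\}$, a short computation using $x_1>{\rm pr}_x(w_0)=\frac{x_1+x_2}{2}>x_2>x_3>\cdots$ shows that $\rho(\alpha,\alpha)$ has strictly maximal $x$-coordinate among all $\rho(a,b)$: for $a,b\ge\alpha$, each extra unit of block length beyond $\alpha$ exchanges a $w_0$-iterate for a $v_\cdot(k)$-iterate with $k\ge2$, lowering ${\rm pr}_x$, and for $b<\alpha$ the $x_1$-weight drops to $\frac{1}{a+b}$. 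Thus $\rho(\alpha,\alpha)=\frac12\bigl(w_1(\alpha)+w_2(\alpha)\bigr)$ is the \emph{unique} point of $\R(\Phi)$ at that $x$-coordinate, and the points $w_i(\alpha)$, sharing that $x$-coordinate but with ${\rm pr}_y=\pm\frac{l(x_1)}{3\alpha}\ne0$, cannot lie in $\R(\Phi)$. The paper's purported realization --- ``$n_1=n_2=n_3=\alpha$ gives $\rv(x)=w_1(\alpha)$'' --- applies the per-block formula linearly to $x=(0^\alpha2^\alpha1^\alpha)^\infty$, but cyclically this orbit has one $S_1$-block of length $2\alpha$ and one $S_2$-block of length $\alpha$, so $\rv(x)=\frac{1}{3\alpha}\bigl[2(\alpha-1)w_0+v_2(1)+\sum_{i=1}^{\alpha+1}v_1(i)\bigr]$ with $y$-coordinate $-\frac{1}{3\alpha}\sum_{i=2}^{\alpha+1}l(x_i)$, not $-\frac{l(x_1)}{3\alpha}$. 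This is exactly the cyclic-merging problem above, so the paper's resolution of your gap does not hold up; the statement of the proposition appears to need the vertices $w_i(\alpha)$ replaced by the genuine extreme point $\rho(\alpha,\alpha)$.
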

\begin{proof}
First we show that the sequence of points $\{w_1(j)\}_{j>\alpha}$ is monotonically decreases to the origin. By symmetry, this  immediately implies  that the sequence $\{w_2(j)\}_{j>\alpha}$  increases monotonically  to the origin. It follows from (\ref{wj}) that for any $j>\alpha$ we have
\begin{align}
w_1(j)-w_1(j+1)
%&=\frac{1}{j(j+1)}\left[(j+1)\sum\limits_{k=1}^{j-\alpha}v_i(k)+(j+1)\alpha w_0-j\sum\limits_{k=1}^{j-\alpha+1}v_i(k)-j\alpha w_0\right]\\
&=\frac{1}{j(j+1)}\left[\alpha w_0+\sum\limits_{k=1}^{j-\alpha}v_1(k)-jv_1(j+1-\alpha)\right].
\end{align}
The $x$-coordinate of $w_1(j)-w_1(j+1)$ is always positive, since the $x_k$ are decreasing and ${\rm pr}_x(w_0)>x_{j+1-\alpha}$. The $y$-coordinate of ${w_1(j)-w_1(j+1)}$ simplifies to 
\begin{equation}
\sum_{k=1}^{j-\alpha}l(x_k)-jl(x_{j+1-\alpha}).
\end{equation}
 This expression is positive whenever $l(x_1)>(\alpha+1)l(x_{j+1-\alpha})$. This can always be achieved starting from some $j_0$ since $l(x_k)$ is decreasing to zero. Therefore, $w_1(j)$ are decreasing for $j>j_0$. The assumptions of the proposition assure that we may take $j_0=\alpha$, however this condition is not essential.

The result by Sigmund that the periodic point measures are dense in $\cM$ reduces our considerations to rotation vectors of periodic orbits.

Suppose $x\in X$ is a periodic point of period $n$. We may assume that $x=(\xi_1,...,\xi_n,...)$ and $(\xi_1,...\xi_n)$ is maximally partitioned into $k$ blocks of sizes $n_1,...,n_k$ such that $n_1+...+n_k=n$, and each block exclusively contains elements of either $S_1$ or $S_2$. It follows from the construction of $\Phi$ that $n\cdot\rv(x)$ (where $\rv(x)$ denotes the rotation vector of the unique invariant measure supported on the orbit of $x$) is the sum of blocks of vectors of the form
\begin{equation}\label{decomposition}
(\alpha-1)w_0+\sum_{i=1}^{n_j-(\alpha-1)}v_s(i).
\end{equation}
Here $s=1$ if the elements of $j^\text{th}$ block are from $S_1$ and $s=2$ if the elements of $j^\text{th}$ block are from $S_2$.
In case $n_j\le \alpha-1$ the block's contribution is $n_jw_0$.

First we show that $\rv(x)\in \overline{\rm Conv}\{w_s(j): j\ge\alpha,\,s=1,2\}$ for $k=2$. In this case we have
\begin{equation}\rv(x)=\frac1n \left[2(\alpha-1)w_0+\sum\limits_{i=1}^{n_1-\alpha+1}v_1(i)+\sum\limits_{i=1}^{n_2-\alpha+1}v_2(i)\right].\end{equation}
By symmetry we restrict ourselves to the case $n_1>n_2$. We compare $\rv(x)$ with points $w_1(n)$ and $\frac{n_1}{n}w_1(n_1)+\frac{n_2}{n}w_1(n_2)$. For the $x$-coordinates we obtain that
\begin{equation}{\rm pr}_x(w_1(n))\le{\rm pr}_x\left(\rv(x)\right)\le{\rm pr}_x\left(\frac{n_1}{n}w_1(n_1)+\frac{n_2}{n}w_1(n_2)\right)\end{equation}
as long as $n_j>\alpha$ for either $j=1$ or $j=2$. When $n_1=n_2=\alpha$ we obtain $\rv(x)=\frac{(\alpha-1)w_0+(x_1,0)}{\alpha}$, which is the mid-point between $w_1(\alpha)$ and $w_2(\alpha)$. For the $y$-coordinates we obtain
\begin{equation}{\rm pr}_y\left(\frac{n_1}{n}w_1(n_1)+\frac{n_2}{n}w_1(n_2)\right)\ge{\rm pr}_y(w_n)>{\rm pr}_y(\rv(x)),\end{equation} and thus $\rv(x)\in \overline{\rm Conv}\{w_i(j): j\ge\alpha,\,i=1,2\}$.

The case $k=3$ is similar. We have $n=n_1+n_2+n_3$ with $n_1\ge n_3$. By symmetry, we may assume that $\rv(x)$ is above the $x$-axis and that we can write
\begin{equation}\rv(x)=\frac1n \left[3(\alpha-1)w_0+\sum\limits_{i=1}^{n_1-\alpha+1}v_1(i)+\sum\limits_{i=1}^{n_2-\alpha+1}v_2(i)+\sum\limits_{i=1}^{n_3-\alpha+1}v_1(i)\right].\end{equation}  We compare $\rv(x)$ with points $w_1(n)$ and $\frac{n_1+n_2}{n}w_1(n_1+n_2)+\frac{n_3}{n}w_1(n_3)$. For the $x$-coordinates we obtain
\begin{equation}{\rm pr}_x(w_1(n))\le{\rm pr}_x(\rv(x))\le{\rm pr}_x\left(\frac{n_1+n_2}{n}w_1(n_1+n_2)+\frac{n_3}{n}w_1(n_3)\right)\end{equation} whenever $n_j>\alpha$ for at least for one $j$. In the case $n_1=n_2=n_3=\alpha$ we have $\rv(x)=w_1(\alpha)$.

For the $y$-coordinates we see that
\begin{equation}{\rm pr}_y\left(\frac{n_1+n_2}{n}w_1(n_1+n_2)+\frac{n_3}{n}w_1(n_3)\right)\ge{\rm pr}_y(w_n)>{\rm pr}_y(\rv(x)).\end{equation} It follows that $\rv(x)\in \overline{\rm Conv}\{w_i(j): j\ge\alpha,\,i=1,2\}$.

To conclude the proof we notice that the rotation vector of any periodic orbit can be written as a convex combination of vectors described in the previous two cases and $w_0$.
\end{proof}
Figure 1 illustrates the rotation set of the potential $\Phi$ (see \eqref{defpotphi}) where the set $C$ is a ellipse $(x-1)^2+\frac{y^2}{2^2}=1$, $x_1=1$, $x_k=\frac{1}{6^{k}}$ for $k>1$ and $\alpha=3$. Below we plot 1000 data points of this rotation set. The shape of the resulting graph gives it the name 'fish'.
\begin{figure}[h]
\begin{center}
\scalebox{0.65}{\includegraphics{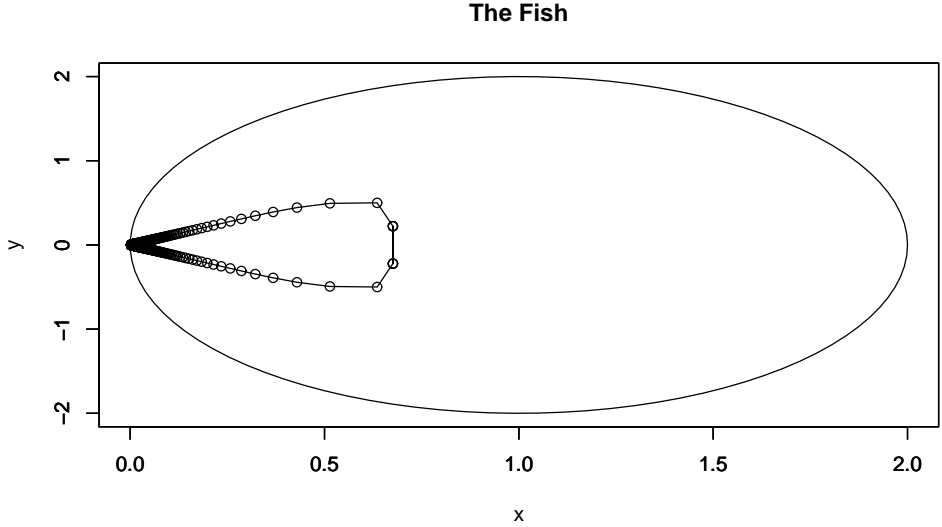}}
\caption{The Rotation set of the fish based on 1000 data points.}
\label{RotSet}
\end{center}
\end{figure}

We now  list several properties of the system in Example \ref{ex4} that hold without the symmetry assumptions in Proposition \ref{prop1}.
\begin{theorem}\label{theomex}
Let $X,f$ and $\Phi$ be as in Example \ref{ex4}. Then
\begin{enumerate}
\item[(i)] $\Phi$ is Lipschitz continuous;
\item[(ii)] ${\rm int}\ \R(\Phi)\not=\emptyset$ and $\R(\Phi)\subset {\rm int}\  C\cup \{w_\infty\}$;
\item[(iii)]  There exists precisely two ergodic localized measures of maximal entropy at $w_{\infty}$;
\item[(iv)]  The entropy function $w\mapsto H(w)\eqdef P_{\rm top}(0,\Phi,w)$ is real analytic in ${\rm int}\ \R(\Phi)$.
\end{enumerate}
\end{theorem}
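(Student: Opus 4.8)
The plan is to verify (i)--(iii), which are structural statements about the combinatorics of $\Phi$, and then to derive (iv) from Theorem~\ref{theovar} together with convex duality and the strong thermodynamic properties. For (i), the key observation is that $\Phi(x)$ depends only on the index $i(x)\in\{1,2\}$ with $x_1\in S_{i(x)}$ and on the ``first switching time'' $\tau(x)=\inf\{k:\,x_k\notin S_{i(x)}\}$: one has $\Phi(x)=w_0$ if $\tau(x)\le\alpha$, $\Phi(x)=v_{i(x)}(\tau(x)-\alpha)$ if $\alpha<\tau(x)<\infty$, and $\Phi(x)=w_\infty$ if $\tau(x)=\infty$. If $x,y$ agree on the first $n-1$ coordinates with $n-1\ge\alpha$, then either $i(x)=i(y)$ and $\tau(x)=\tau(y)\le n-1$, so $\Phi(x)=\Phi(y)$; or $\tau(x),\tau(y)\ge n>\alpha$, and then the requirement $|v_i(k)-w_\infty|<2^{-k}$ forces $|\Phi(x)-w_\infty|,|\Phi(y)-w_\infty|\le 2^{-(n-\alpha)}$, hence $|\Phi(x)-\Phi(y)|\le 2^{\alpha+1}2^{-n}$. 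Writing the metric as $d(x,y)=\rho^{\min\{k:\,x_k\ne y_k\}}$ for a fixed $\rho\in[\tfrac12,1)$ (it induces the product topology), this bound is $\le 2^{\alpha+1}d(x,y)$, while the finitely many pairs with $\min\{k:\,x_k\ne y_k\}<\alpha$ are controlled by any constant because there $d(x,y)\ge\rho^{\alpha}$ is bounded away from $0$; thus $\Phi$ is Lipschitz.

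For (ii), since $\Phi(X)\subset C$ and $C$ is convex, $\R(\Phi)=\{\int\Phi\,d\mu:\mu\in\cM\}\subset C$, and $w_\infty\in\R(\Phi)$ because $\Phi(\overline 0)=w_\infty$ with $\overline 0$ fixed. Suppose some $\mu\in\cM$ had $\rv(\mu)\in\partial C\setminus\{w_\infty\}$: strict convexity of $\partial C$ makes this point extreme in $C$, so the barycenter of $\mu\circ\Phi^{-1}$ is extreme and $\mu\circ\Phi^{-1}$ is a Dirac mass, i.e.\ $\Phi=\rv(\mu)$ $\mu$-a.e.\ with $\rv(\mu)\in\Phi(X)\cap\partial C=\{v_i(k):i=1,2,\ k\ge1\}\cup\{w_\infty\}$; but $\Phi^{-1}(v_i(k))$ is disjoint from $f^{-1}\big(\Phi^{-1}(v_i(k))\big)$, so it carries no invariant measure --- a contradiction, giving $\R(\Phi)\subset{\rm int}\,C\cup\{w_\infty\}$. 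That ${\rm int}\,\R(\Phi)\ne\emptyset$ follows by exhibiting three affinely independent periodic rotation vectors, e.g.\ $w_0=\rv((02)^\infty)$, $w_1(\alpha+1)=\rv((1^{\alpha}2)^\infty)$ and $w_1(\alpha+2)=\rv((1^{\alpha+1}2)^\infty)$; affine independence is a short computation using $w_1(\alpha+1)-w_0=\tfrac1{\alpha+1}(v_1-w_0)$, $w_1(\alpha+2)-w_0=\tfrac1{\alpha+2}\big((v_1-w_0)+(v_1(2)-w_0)\big)$, and that $v_1(2)$ is not on the line through $w_0$ and $v_1$ since it lies strictly on one side of the line $\ell'$ through $w_\infty$ and $w_0$.

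For (iii), exactly as in the previous paragraph $\cM_\Phi(w_\infty)$ consists of the invariant measures supported on $\Phi^{-1}(w_\infty)$, on which $f$ is the subshift $f_{\rm A}$, i.e.\ the disjoint union of two copies of the full shift on two symbols. Decomposing $\mu\in\cM_\Phi(w_\infty)$ as $t\mu_1+(1-t)\mu_2$ with $\mu_i$ supported on the $i$-th copy yields $h_\mu(f)=t\,h_{\mu_1}(f)+(1-t)\,h_{\mu_2}(f)\le\log2$, and for $0<t<1$ equality holds iff both $\mu_i$ are the $(\tfrac12,\tfrac12)$-Bernoulli measures; such a $\mu$ is ergodic only if $t\in\{0,1\}$. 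Hence the ergodic localized measures of maximal entropy at $w_\infty$ are precisely the two Bernoulli measures, each of entropy $\log2$, and there are exactly two of them.

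For (iv) --- the substantial point --- set $\mathcal P(t)=P_{\rm top}(t\cdot\Phi)$ on $\bR^m$. By the strong thermodynamic properties of $f$, $\mathcal P$ is real-analytic, $\nabla\mathcal P(t)=\rv(\mu_{t\Phi})$ with $\mu_{t\Phi}$ ergodic (by \eqref{eqdifpre}), and $\mathcal P$ is strictly convex because $\dim\R(\Phi)=m$ forces its Hessian to be positive definite. Thus $\nabla\mathcal P$ is a real-analytic diffeomorphism of $\bR^m$ onto an open subset of $\R(\Phi)$, and a coercivity argument as in Jenkinson \cite{Je} identifies this image with ${\rm int}\,\R(\Phi)$ and shows that for $w\in{\rm int}\,\R(\Phi)$ the infimum $\inf_t\big(\mathcal P(t)-\langle t,w\rangle\big)$ is attained at $t(w):=(\nabla\mathcal P)^{-1}(w)$. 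Since the variational principle gives $\mathcal P(t)=\sup_w\big(P_m(0,\Phi,w)+\langle t,w\rangle\big)$, $\mathcal P$ is the Fenchel conjugate of $-P_m(0,\Phi,\cdot)$ (extended by $+\infty$ off $\R(\Phi)$), which is proper, convex and lower semicontinuous because the shift is expansive and so the entropy map is upper semicontinuous; by Fenchel--Moreau duality $P_m(0,\Phi,w)=\mathcal P(t(w))-\langle t(w),w\rangle$ for $w\in{\rm int}\,\R(\Phi)$, which is real-analytic in $w$. Moreover $P_m(0,\Phi,w)=h_{\mu_{t(w)\Phi}}(f)$ is attained by the ergodic measure $\mu_{t(w)\Phi}$, whose rotation vector is $w$; since $X$ is a Besicovitch space (its metric, and every Bowen metric $d_n$, is an ultrametric; see \cite{F,Lo}) and $v\mapsto P_m(0,\Phi,v)$ is continuous, the hypotheses of Theorem~\ref{theovar} hold at every $w\in{\rm int}\,\R(\Phi)$, whence $H(w)=P_{\rm top}(0,\Phi,w)=P_m(0,\Phi,w)$ is real-analytic on ${\rm int}\,\R(\Phi)$. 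The main obstacle is exactly this last part --- verifying that $\nabla\mathcal P$ exhausts all of ${\rm int}\,\R(\Phi)$ and that the ``approximation by ergodic measures'' and Besicovitch hypotheses of Theorem~\ref{theovar} genuinely apply here --- after which (i)--(iii) are comparatively routine once the dependence of $\Phi$ on the switching time $\tau$ has been made explicit.
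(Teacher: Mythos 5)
Your proof is correct, and in parts (ii)--(iii) it takes a cleaner, more conceptual route than the paper. The paper proves (ii) by invoking Sigmund's density of periodic orbit measures and expressing rotation vectors of periodic orbits as explicit convex combinations of the auxiliary points $w_s^*(j)$; you instead observe that $\R(\Phi)\subset C$ since $\Phi(X)\subset C$ and $C$ is convex, and that strict convexity makes every point of $\partial C$ exposed, so any $\mu$ with $\rv(\mu)\in\partial C$ must push forward to a Dirac mass; the $f$-invariance argument ruling out $v_i(k)$ as a rotation vector is then decisive. This is shorter and avoids periodic-orbit bookkeeping. For (iii) the contrast is sharper: the paper estimates the exponential growth rate of $(n,\epsilon,w_\infty,r)$-periodic orbits to get $P_{\rm top}(0,\Phi,w_\infty)=\log 2$ and then asserts the two Bernoulli measures on $X_{\rm A}$ are the extremal ones, whereas you apply the same extreme-point reduction to conclude directly that $\cM_\Phi(w_\infty)$ is exactly the set of $f$-invariant measures on $X_{\rm A}=\Phi^{-1}(w_\infty)$, decompose $X_{\rm A}$ into two disjoint full 2-shifts, and read off the answer by affinity of entropy. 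Your argument establishes the measure-theoretic statement (iii) without any detour through the topological localized pressure, which is arguably closer to what the statement demands. For (iv) the paper simply cites \cite{KW}; you reconstruct the underlying argument via Fenchel duality and the analyticity/strict convexity of $t\mapsto P_{\rm top}(t\cdot\Phi)$, and you correctly flag the non-trivial step (surjectivity of $\nabla\mathcal{P}$ onto ${\rm int}\,\R(\Phi)$, à la Jenkinson) and correctly check the hypotheses of Theorem~\ref{theovar} (ultrametric hence Besicovitch, continuity of $P_m$, and exact attainment by the ergodic measure $\mu_{t(w)\Phi}$). Two tiny cosmetic points: your cutoff ``$\min\{k:x_k\ne y_k\}<\alpha$'' in (i) should be ``$\le\alpha+1$'' so that the bound $|v_i(k)-w_\infty|<2^{-k}$ (assumed only for $k>1$) genuinely covers the remaining case, though this changes nothing since those finitely many scales are handled by $\mathrm{diam}(C)$; and the affine-independence of $w_0,w_1(\alpha+1),w_1(\alpha+2)$ rests on $v_1(2)\notin\mathrm{aff}\{w_0,v_1\}$, which does follow from your positioning of $v_1(2)$ but could be stated more explicitly (the paper's choice $w_0,w_\infty,w_1(\alpha+1)$ is slightly more economical).
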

\begin{proof}
{\rm (i) } We will work with the $d_{1/2}$ metric (see \eqref{defmet}) on $X$ to show that $\Phi$ is Lipschitz continuous. Set $\gamma={\rm diam}(C)$. Let $x,y\in X$ with $\Phi(x)\not=\Phi(y)$. If $\Phi(x)=w_0$
then $x_k\not=y_k$ for some $k\leq \alpha$. Hence,
\begin{equation}\label{eqwsx}
\|\Phi(x)-\Phi(y)\|_2\leq \gamma = \gamma 2^\alpha \frac{1}{2^\alpha} \leq \gamma 2^\alpha d(x,y).
\end{equation}
The case $\Phi(y)=w_0$ is analogous. The case $\Phi(x)\in l_i\setminus\{w_\infty\}$ and $\Phi(y)\in l_j\setminus \{w_\infty\}$ with $i\not=j$ can be treated analogously as in \eqref{eqwsx}.
It remains to consider the case $\Phi(x),\Phi(y)\in l_i$ for some $i=1,2$. Without loss of generality we assume that $\Phi(y)$ is further along on the path to $w_\infty$ as $\Phi(x)$.
Thus, $d(x,y)\geq\frac{1}{2^{k_x+1}}$ where $k_x$ is defined by $\Phi(x)=w_i(k_x)$. Since $|w_i(k)-w_\infty|<1/2^k$ for all $k \in \bN$, we conclude that
\begin{equation}\label{eqwsx1}
\|\Phi(x)-\Phi(y)\|_2\leq \frac{2}{2^{k_x}} \leq 4 d(x,y)
\end{equation}
which completes the proof of (i).\\
\noindent
(ii) Note that points $w_0, w_\infty$ and $\frac{v_1(1)+\alpha w_0}{\alpha+1}$ belong to  $\R(\Phi)$ and thus ${\rm int }\,\R(\Phi)\not=\emptyset$.

To prove that $\R(\Phi)\subset {\rm int}\  C\cup \{w_\infty\}$ we apply again the result of Sigmund that the periodic point measures are weak$^\ast$ dense in $\cM$. Suppose $x\in X$ is a periodic point of period $n$ and that we have the decomposition (\ref{decomposition}). We use the notation
\begin{equation}w_s^*(j)=\frac{(\alpha-1)w_0+\sum\limits_{i=1}^{j-(\alpha-1)}v_s(i)}{j},\quad s=1,2;\,\, j\ge\alpha.\end{equation}
Then the rotation vector of any periodic orbit can be written as a convex combination of $w_s^*(j)$ and $w_0$. The fact that the set $C$ is strictly convex implies that $w_s^*(j)\in \text{int }C$ for all $j$. Also, it is easy to see that the $w_s^*(j)$ converge to $w_\infty$ as $j\to\infty$. Indeed,
\begin{equation}\begin{aligned}
|w_\infty-w_s^*(j)|&\le\frac1j\left((\alpha-1)|w_0-w_\infty|+\sum\limits_{i=1}^{j-\alpha+1}|v_s(i)-w_\infty|\right)\\
&\le \frac{1}{j}\left((\alpha-1)|w_0-w_\infty|+\sum\limits_{i=1}^{j-\alpha+1}\frac{1}{2^i}\right)\\
&\le\frac{(\alpha-1)|w_0-w_\infty|+1}{j}.
\end{aligned}\end{equation}
Since $\{w_i^*(j)\}_{j\ge\alpha}\subset {\rm int}\,C$ and $w_\infty$ is their only accumulation point, we have $\overline{\rm Conv}\{w_i^*(j)\}_{j\ge\alpha}\subset {\rm int}\,C\cup\{w_\infty\}$ and thus $\R(\Phi)\subset {\rm int}\,C\cup\{w_\infty\}$.
\\
\noindent
(iii) We will compute the logarithmic rate of growth of periodic orbits with rotation vectors in the neighborhood of $w_\infty$. Fix $0<r<\frac12 d(w_0,w_\infty)$. Suppose $x\in X$ is a periodic point of period $n$ and $\rv(x)\in D(w_\infty,r)$. We may assume decomposition (\ref{decomposition}). Since there are $k$ blocks and each block contributes at least one $w_0$ to $\rv(x)$ we have \begin{equation}\frac{kd(w_0,w_\infty)}{n}<d(\rv(x),w_\infty)<r.\end{equation} Therefore, $k<\frac{nr}{d(w_0,w_\infty)}$. Denote $m=\lfloor{\frac{nr}{d(w_0,w_\infty)}}\rfloor$, the largest integer smaller than $\frac{nr}{d(w_0,w_\infty)}$. Note that $m<\frac12 n$ since $r<\frac12 d(w_0,w_\infty)$.

The maximal number of points of period $n$ in $D(r,w_\infty)$ is
\begin{equation}\sum_{k=1}^m\binom{n}{k-1}\prod_{j=1}^k 2^{n_j}=2^n\sum_{k=1}^m\binom{n}{k-1}.\end{equation}

We will estimate $\sum_{k=1}^m\binom{n}{k-1}=\binom{n}{m-1}+\binom{n}{m-2}+\ldots+\binom{n}{0}$. We have
\begin{equation}\begin{aligned}
&\frac{\binom{n}{m-1}+\binom{n}{m-2}+\ldots+\binom{n}{0}}{\binom{n}{m}}\\
&=\frac{m}{n-m+1}+\frac{m(m-1)}{(n-m+1)(n-m+2)}+\ldots\\%\frac{m(m-1)(m-2)}{(n-m+1)(n-m+2)(n-m+3)}+\ldots\\
&\le \frac{m}{n-m+1}+\left(\frac{m}{n-m+1}\right)^2+\left(\frac{m}{n-m+1}\right)^3+\ldots\\
&=\frac{m}{n-2m+1}.
\end{aligned}\end{equation}
In the last equality we used the sum of geometric progression with common ratio $\frac{m}{n-m+1}$ which is less than one since $m<\frac12 n$. We obtain \begin{equation}\sum_{k=1}^m\binom{n}{k-1}\le\binom{n}{m}\frac{m}{n-2m+1}.\end{equation} Using the well known fact that $\frac{n^n}{e^{n-1}}\le n!\le\frac{(n+1)^{n+1}}{e^n}$ we obtain
\begin{equation}\log\binom{n}{m}\le(n+1)\log(n+1)-m\log m-(n-m)\log(n-m).\end{equation}
To simplify the notation in the following computation we denote $\rho=\frac{r}{d(w_0,w_\infty)}$. Using $n\rho-1< m\le n\rho $, we estimate the growth rate of the periodic orbits of period $n$ in $D(r,w_\infty)$.
\begin{multline}
\frac{1}{n}\log 2^n\sum_{k=1}^m\binom{n}{k-1}\le \log 2+\frac{1}{n}\log\frac{n\rho}{n-3n\rho+3}+\frac{n+1}{n}\log(n+1)\\ -\frac{n\rho-1}{n}\log(n\rho-1)-\frac{n-n\rho}{n}\log(n-n\rho). \\
\end{multline}
Passing to the limit as $n$ approaches infinity we obtain the growth rate of the periodic orbits
\begin{equation}\begin{aligned}
  &\limsup\limits_{n\to\infty}\frac{1}{n}\log 2^n\sum_{k=1}^m\binom{n}{k-1}\\
  &\le \log 2+\lim_{n\to\infty}\left[\log(n+1)-\rho\log(n\rho-1)-(1-\rho)\log(n-n\rho)\right] \\
  & = \log 2 + \log\rho -(1-\rho)\log(1-\rho).
\end{aligned}\end{equation}
Note that the last expression is greater than $\log 2$ since $\log(1-\rho)<0$. Since $\rho\to 0$ as $r\to 0$, we have \begin{equation}\lim_{r\to 0}\left[\log 2 + \log\rho -(1-\rho)\log(1-\rho)\right]=\log 2.\end{equation}

Therefore, we have $P_{\rm top}(0,\Phi,w_\infty)=h_{\rm top}(f_{\rm A})=\log 2$. Thus, the two distinct ergodic localized measures of maximal entropy at $w_\infty$ are the two ergodic measures of maximal entopy of $f_{\rm A}$.\\
\noindent
Finally, (vi) is a result of \cite{KW}.
\end{proof}

\begin{remarks}
{\rm (i) }The cardinality of ergodic localized equilibrium states at the boundary is in general not preserved under small changes of the potential $\Phi$. Indeed, for any $\epsilon>0$ pick a point $w_\epsilon\in\partial C$ such that $w_\epsilon\ne w_\infty$ and dist$(w_\infty,w_\epsilon)<\epsilon$. Redefine $v_2(k)$ within an $\epsilon$-neighbourhood so that $|w_\epsilon-v_2(k)|<\frac{1}{2^k}$. Define
\begin{equation}
\Phi_\epsilon(x)=\begin{cases}
w_0\qquad & {\rm if}\,\,   x\in Y_0(\alpha)\\
                         v_{i}(k-\alpha) & {\rm if}\,\,  x\in Y_{i}(k-1)\ {\rm and}\ x\not\in Y_i(k),\, k> \alpha\\
                          w_\infty & {\rm if}\,\,   \  x\in Y_{1}(k)\  {\rm for\, all }\, k \\
                          w_\epsilon & {\rm if}\,\,   \  x\in Y_{2}(k)\  {\rm for\, all }\, k\\
            \end{cases}
\end{equation}
Clearly $\|\Phi-\Phi_\epsilon\|<\epsilon$. However, $\Phi_\epsilon$ has a unique (ergodic) localized measure of maximal entropy at $w_\infty$ and $\Phi$ has precisely two ergodic localized measures of maximal entropy at $w_\infty$.\\
{\rm (ii) } In a forthcoming note we construct examples of H\"older continuous potentials  $\varphi, \Phi$ with countable infinitely many ergodic localized equilibrium states  at some boundary point  of $\R(\varphi,\Phi)$. We refer to \cite{KW2} for details.
\end{remarks}

%The following example shows that the uniqueness of a localized equilibrium state at the boundary is in general not preserved under small changes of the potential $\Phi$.

Next, we consider interior localized equilibrium states. Recall our standing assumptions that $\Phi=(\phi_1,\cdots,\phi_m)\in C^\delta(X,\bR^m)$ for some $\delta>0$ and that $\dim \R(\Phi)=m$ (i.e. no non-trivial linear combination $t\cdot \Phi=(t_1,\cdots,t_m)\cdot \Phi$ is cohomologous to a constant). Let $\varphi\in C^\delta(X,\bR)$. We first consider the case $\dim\R(\varphi,\Phi)=m$. As noted before, this means that either
$\varphi$ is cohomologous to a constant or $\varphi$ is cohomologous to some non-trivial linear combination $t\cdot \Phi$. It follows from the convexity of $\R(\varphi,\Phi)$ that $I_w\eqdef\{\int \varphi\ d\mu: \mu\in \cM_\Phi(w)\}$ is a singleton. In particular, $\mu\in\cM_\Phi(w)$ is a localized equilibrium state of $\varphi$ with respect to $\Phi$ and $w$ if and only if $\mu$ is a localized measure of maximal entropy at $w$. For $t=(t_1,\cdots,t_m)$ let us denote by $\mu_t$ the (classical) equilibrium state of the potential $t\cdot \Phi$ (which is well-defined by property 4 of (STP)). In \cite{KW} we proved the following result.

\begin{theorem}

Let $w\in {\rm int}\ \R(\Phi)$ and assume $I_w$ is a singleton. Then there exits a unique localized measure of maximal entropy $\mu$ at $w$. Moreover, $\mu=\mu_t$ for some uniquely defined $t\in\bR^m$.

\end{theorem}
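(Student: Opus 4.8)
The plan is to reduce the statement to the convex analysis of the pressure function on the $m$-parameter family $t\cdot\Phi$. Put $\mathcal{Q}(t)=P_{\rm top}(f,t\cdot\Phi)$ for $t\in\bR^m$. By property (3) of (STP), $\mathcal{Q}$ is real-analytic; by property (4), $\nabla\mathcal{Q}(t)=\rv(\mu_t)$; and by property (6) together with the standing hypothesis $\dim\R(\Phi)=m$ (no non-trivial $t\cdot\Phi$ is cohomologous to a constant), every non-trivial directional second derivative of $\mathcal{Q}$ is strictly positive, so $\nabla^2\mathcal{Q}(t)$ is positive definite at each $t$ and $\mathcal{Q}$ is strictly convex. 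The first goal is to show that $t\mapsto\nabla\mathcal{Q}(t)$ is a real-analytic diffeomorphism from $\bR^m$ onto ${\rm int}\,\R(\Phi)$. Granting this, for the given $w\in{\rm int}\,\R(\Phi)$ I take $t=t(w)$ to be the unique parameter with $\rv(\mu_t)=w$, so that $\mu_t\in\cM_\Phi(w)$, and I claim $\mu:=\mu_t$ is the desired measure.

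Injectivity of the gradient map follows from strict convexity; equivalently, if $\mu_{t_1}=\mu_{t_2}$ then property (5) forces $(t_1-t_2)\cdot\Phi$ to be cohomologous to a constant, hence $t_1=t_2$ by $\dim\R(\Phi)=m$ — this also yields the uniqueness of $t$ asserted in the theorem. Since $\nabla^2\mathcal{Q}$ is positive definite, the gradient map is a local diffeomorphism, so its image $U=\nabla\mathcal{Q}(\bR^m)$ is open; as $U\subset\R(\Phi)$ trivially, $U\subset{\rm int}\,\R(\Phi)$. The real work is the reverse inclusion, which I regard as the main obstacle: one must show that no interior rotation vector is missed. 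Here I would use Legendre--Fenchel duality. Let $H(v)=\sup\{h_\mu(f):\mu\in\cM,\ \rv(\mu)=v\}$ for $v\in\R(\Phi)$ (and $H\equiv-\infty$ elsewhere); this function is concave (because $\mu\mapsto h_\mu(f)$ and $\mu\mapsto\rv(\mu)$ are affine) and, by property (2) of (STP) together with compactness of $\cM$ and continuity of $\rv$, upper semi-continuous, with effective domain exactly $\R(\Phi)$. The variational principle \eqref{eqvarpri} rewrites as $\mathcal{Q}(t)=\sup_{v\in\R(\Phi)}\bigl(H(v)+t\cdot v\bigr)$, so $\mathcal{Q}$ is (up to sign) the conjugate of $H$; since $\mathcal{Q}$ is everywhere differentiable, the range of $\nabla\mathcal{Q}$ coincides with the set of points at which $H$ has a non-empty superdifferential, and by standard convex analysis this set contains the relative interior of the effective domain of $H$, namely ${\rm int}\,\R(\Phi)$. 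Thus $U={\rm int}\,\R(\Phi)$. This duality is essentially the argument of Jenkinson \cite{Je}; see also \cite{KW}.

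It remains to verify that $\mu_t$ is the unique localized measure of maximal entropy at $w$. Given any $\nu\in\cM_\Phi(w)$, we have $\rv(\nu)=w=\rv(\mu_t)$, hence $\int t\cdot\Phi\,d\nu=t\cdot w=\int t\cdot\Phi\,d\mu_t$, and the classical variational principle applied to $t\cdot\Phi$ gives $h_\nu(f)+t\cdot w\le P_{\rm top}(f,t\cdot\Phi)=h_{\mu_t}(f)+t\cdot w$, so $h_\nu(f)\le h_{\mu_t}(f)$; thus $\mu_t$ maximizes the entropy over $\cM_\Phi(w)$, equivalently (since $I_w$ is a singleton) the free energy of $\varphi$. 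If some $\nu\in\cM_\Phi(w)$ also attains this maximum, then $h_\nu(f)+\int t\cdot\Phi\,d\nu=P_{\rm top}(f,t\cdot\Phi)$, i.e. $\nu$ is a classical equilibrium state of $t\cdot\Phi$, so $\nu=\mu_t$ by property (4). Combined with the uniqueness of $t$ established above, this completes the proposed argument; the only non-routine ingredient is the surjectivity of $\nabla\mathcal{Q}$ onto ${\rm int}\,\R(\Phi)$.
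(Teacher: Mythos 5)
The paper itself defers the proof of this theorem to \cite{KW}, but the surrounding discussion (the cited analyticity of the pressure, the appeal to Jenkinson's results, and the diffeomorphism $F$ introduced immediately afterward) makes clear that the intended argument is exactly the one you give: the map $t\mapsto\nabla P_{\rm top}(f,t\cdot\Phi)=\rv(\mu_t)$ is a real-analytic diffeomorphism onto ${\rm int}\,\R(\Phi)$ via strict convexity (STP (6) plus $\dim\R(\Phi)=m$) together with Legendre--Fenchel duality, after which uniqueness of the localized maximizer reduces to uniqueness of the classical equilibrium state of $t\cdot\Phi$ from STP (4). Your proof is correct and follows essentially the same route.
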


We now consider the case $\dim\R(\varphi,\Phi)=m+1$. Let $w\in {\rm int}\ \R(\Phi)$. It follows from the compactness and the convexity of $\R(\varphi,\Phi)$ that
there exist $\bmin < \bmax$ such that

\begin{equation}\label{p00}
I_w=\left\{\int \varphi \ d\mu: \mu\in \cM_\Phi(w)\right\}=\R(\varphi,\Phi)\cap \bR\times \{w\}=[\bmin,\bmax].
\end{equation}

For $(s,t)=(s,t_1,\cdots,t_m)\in \bR^{m+1}$ let $\mu_{s,t}$ denote the uniquely defined (classical) equilibrium measure of the potential $s\varphi+t\cdot \Phi$. In \cite{KW} we showed that the map $F:\bR\times\bR^m\to {\rm int}\ \R(\varphi,\Phi)$ defined by

\begin{equation}\label{p11}
F(s,t)=\left(\int \varphi \ d \mu_{s,t}, \int \phi_1\ d \mu_{s,t},\cdots, \int \phi_m\ d \mu_{s,t}\right)
\end{equation}
is a real-analytic diffeomorphism and that $\mu_{s,t}$ is the unique measure satisfying

\begin{equation}\label{p22}
h(s,t)\eqdef h_{\mu_{s,t}}(f)=\sup\{h_\nu(f): \rv(\nu)=F(s,t)\}.
\end{equation}

\noindent
Moreover, the map $(s,t)\mapsto h(s,t)$ is
 real-analytic. For $\alpha\in (\bmin,\bmax)$ we write $g(\alpha)=g_w(\alpha)=F^{-1}(\cdot,w)(\alpha)$.

\begin{proposition}\label{protriv}

Let $w\in {\rm int}\ \R(\Phi)$. Then the map $\alpha\mapsto g(\alpha)$ is a real-analytic diffeomorphism onto its image and $\mu_{g(\alpha)}$ is the unique measure satisfying\begin{equation}
h_{\mu_{g(\alpha)}} + \int \varphi\ d \mu_{g(\alpha)}=\sup\left\{h_\mu(f)+ \int \varphi \ d\mu: \mu\in \cM_\Phi(w), \int \varphi\ d\mu= \alpha\right\}.
\end{equation}
In particular, if $\mu$ is an  interior ergodic localized equilibrium state of $\varphi$ with respect to $\Phi$ and $w$, then there exists a unique $\alpha\in (\bmin,\bmax)$ with
$\mu=\mu_{g(\alpha)}$.

\end{proposition}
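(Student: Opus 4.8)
The statement is essentially bookkeeping on top of the two facts about $F$ recalled just above: that $F$ is a real-analytic diffeomorphism of $\bR^{m+1}$ onto ${\rm int}\,\R(\varphi,\Phi)$, and that $\mu_{s,t}$ is the \emph{unique} measure attaining the entropy supremum in \eqref{p22}, i.e.\ among all $\nu\in\cM$ with $\rv(\nu)=F(s,t)$. The plan is to first record that $g$ is well defined on $(\bmin,\bmax)$ and is a diffeomorphism onto its image, and then to observe that imposing the constraint $\int\varphi\,d\mu=\alpha$ turns the localized free-energy supremum at $w$ into precisely the entropy supremum of \eqref{p22}.

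For well-definedness I would check that $(\alpha,w)\in{\rm int}\,\R(\varphi,\Phi)$, hence in the image of $F$, for every $\alpha\in(\bmin,\bmax)$. This is elementary convex geometry: fixing an interior point $(\beta,u)$ of the $(m{+}1)$-dimensional convex body $\R(\varphi,\Phi)$, using $w\in{\rm int}\,\R(\Phi)$ to pick $u'\in\R(\Phi)$ with $w$ strictly between $u$ and $u'$, and choosing any $\beta'\in I_{u'}$, the open segment from $(\beta,u)$ to $(\beta',u')$ lies in ${\rm int}\,\R(\varphi,\Phi)$ and crosses the fibre $\bR\times\{w\}$ at an interior point $(\beta'',w)$; joining $(\beta'',w)$ to the endpoints $(\bmin,w),(\bmax,w)$ of $I_w$ (which belong to $\R(\varphi,\Phi)$ by \eqref{p00}) and using that the half-open segment from an interior point of a convex set to any of its points stays in the interior, one gets every $(\alpha,w)$ with $\alpha\in(\bmin,\bmax)$ in ${\rm int}\,\R(\varphi,\Phi)$. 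Given this, $F$ restricts to a real-analytic diffeomorphism of the real-analytic submanifold $g((\bmin,\bmax))\subset\bR^{m+1}$ onto the segment $(\bmin,\bmax)\times\{w\}$, so after the identification $\alpha\mapsto(\alpha,w)$ the map $g$ is a real-analytic diffeomorphism onto its image.

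For the variational statement I would set $(s,t)=g(\alpha)$, so that $F(s,t)=(\alpha,w)$; equivalently, the measure $\mu_{g(\alpha)}=\mu_{s,t}$ lies in $\cM_\Phi(w)$ and satisfies $\int\varphi\,d\mu_{g(\alpha)}=\alpha$. The key point is that the feasible set $\{\mu\in\cM_\Phi(w):\int\varphi\,d\mu=\alpha\}$ is exactly $\{\nu\in\cM:\rv(\nu)=F(s,t)\}$, and on it the term $\int\varphi\,d\mu\equiv\alpha$ is constant; hence maximizing $h_\mu(f)+\int\varphi\,d\mu$ over it is the same as maximizing $h_\mu(f)$, which by \eqref{p22} equals $h(s,t)=h_{\mu_{s,t}}(f)$ and is attained \emph{only} at $\mu_{s,t}=\mu_{g(\alpha)}$. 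Adding back the constant $\alpha$ yields the displayed identity and the uniqueness assertion.

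Finally, for the ``in particular'' clause I would take an interior localized equilibrium state $\mu$ of $\varphi$ with respect to $\Phi$ and $w$, set $\alpha=\int\varphi\,d\mu$, and note that $(\alpha,w)\in{\rm ri}\,\R(\varphi,\Phi)={\rm int}\,\R(\varphi,\Phi)$ by Definition \ref{defintbo}; a ball around $(\alpha,w)$ inside $\R(\varphi,\Phi)$ together with \eqref{p00} forces $\bmin<\alpha<\bmax$, so $g(\alpha)$ is defined. Since $\mu$ maximizes $h_\nu(f)+\int\varphi\,d\nu$ over all of $\cM_\Phi(w)$ and $\mu$ itself lies in the smaller set $\{\nu\in\cM_\Phi(w):\int\varphi\,d\nu=\alpha\}$, it also maximizes over that smaller set, so by the uniqueness just proved $\mu=\mu_{g(\alpha)}$ (this step does not even use ergodicity of $\mu$). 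Uniqueness of $\alpha$ is immediate, since $\alpha=\int\varphi\,d\mu$ is determined by $\mu$ (it is the first coordinate of $F(g(\alpha))$). The only step requiring genuine care is the convex-geometry verification that the open slice over $w$ lies in ${\rm int}\,\R(\varphi,\Phi)$; the rest is simply unwinding the definitions against \eqref{p22}.
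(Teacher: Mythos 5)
Your proposal is correct and takes the same route the paper intends: the paper's own proof is the one-line assertion that the proposition is a direct consequence of \eqref{p00}, \eqref{p11} and \eqref{p22}, and your write-up is precisely the unwinding of those three facts (the convex-geometry check that the open slice over $w$ lies in ${\rm int}\,\R(\varphi,\Phi)$, then reading \eqref{p22} along the fibre $\rv(\nu)=(\alpha,w)$). Your added observations — that ergodicity of $\mu$ is not actually used in the "in particular" clause, and the explicit verification that $\bmin<\beta''<\bmax$ — are accurate and harmless.
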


\begin{proof}

The statement is a direct consequence of \eqref{p00},\eqref{p11} and \eqref{p22}.
\end{proof}
Finally, we present our main result about interior localized equilibrium states.

\begin{theorem}

Supose that all localized equilibrium states of $\varphi$ with respect to $\Phi$ and $w$ are interior equilibrium states. Then there exists at least one and  at most finitely many ergodic localized equilibrium states of $\varphi$ with respect to $\Phi$ and $w$. All these ergodic localized equilibrium states  are classical equilibrium states.

\end{theorem}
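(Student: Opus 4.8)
The plan is to reduce the problem to the real-analytic one-parameter family $\{\mu_{g(\alpha)}\}_{\alpha\in(\bmin,\bmax)}$ furnished by Proposition~\ref{protriv}. Write $P^{*}=P_{\rm m}(\varphi,\Phi,w)$ and, for $\alpha\in(\bmin,\bmax)$, put $\psi(\alpha)=h_{\mu_{g(\alpha)}}(f)+\int\varphi\,d\mu_{g(\alpha)}=h(g(\alpha))+\alpha$; since $h$ and $g=F^{-1}(\cdot,w)$ are real-analytic, so is $\psi$. First I would settle existence: $\cM_\Phi(w)$ is non-empty (as $w\in\R(\Phi)$) and weak$^{\ast}$-compact, and by property~(2) of \STP\ the free energy $\mu\mapsto h_\mu(f)+\int\varphi\,d\mu$ is upper semi-continuous on it, hence attains its maximum $P^{*}$; so a localized equilibrium state exists.

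The second step identifies the localized equilibrium states. Let $\mu$ be one and set $\alpha=\int\varphi\,d\mu$. By hypothesis $\mu$ is interior, i.e. $(\alpha,w)\in{\rm ri}\,\R(\varphi,\Phi)={\rm int}\,\R(\varphi,\Phi)$ (the equality because $\dim\R(\varphi,\Phi)=m+1$). Since $I_{w}=[\bmin,\bmax]$, the endpoints $(\bmin,w)$ and $(\bmax,w)$ are not interior to $\R(\varphi,\Phi)$, so necessarily $\alpha\in(\bmin,\bmax)$. Because $\mu$ maximizes the free energy over all of $\cM_\Phi(w)$, it maximizes it over the slice $\{\nu\in\cM_\Phi(w):\int\varphi\,d\nu=\alpha\}$, whose unique maximizer is $\mu_{g(\alpha)}$ by Proposition~\ref{protriv}; hence $\mu=\mu_{g(\alpha)}$. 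This already yields the last assertion — $\mu$ is ergodic and is the classical equilibrium state of $s\varphi+t\cdot\Phi$ with $(s,t)=g(\alpha)$ — and it shows that the set of ergodic localized equilibrium states equals $\{\mu_{g(\alpha)}:\alpha\in A\}$ with $A=\{\alpha\in(\bmin,\bmax):\psi(\alpha)=P^{*}\}$, the map $\alpha\mapsto\mu_{g(\alpha)}$ being injective since $\int\varphi\,d\mu_{g(\alpha)}=\alpha$.

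It remains to bound $|A|$, and here the new boundary phenomenon (responsible for Examples~\ref{ex3} and~\ref{ex4}) must be exploited. Taking convex combinations $\lambda\mu_{g(\alpha_{1})}+(1-\lambda)\mu_{g(\alpha_{2})}$, which stay in $\cM_\Phi(w)$, and using affinity of entropy shows $\psi$ is concave on $(\bmin,\bmax)$; so $A$, the maximizer set of a concave function, is a subinterval. If $A$ had positive length, real-analyticity would force $\psi\equiv P^{*}$ on all of $(\bmin,\bmax)$. But the function $\alpha\mapsto\sup\{h_\nu(f)+\int\varphi\,d\nu:\nu\in\cM_\Phi(w),\ \int\varphi\,d\nu=\alpha\}$, which equals $\psi$ on $(\bmin,\bmax)$, is upper semi-continuous on $[\bmin,\bmax]$ by upper semi-continuity of entropy and compactness; hence its value at $\bmin$ is at least $\lim_{\alpha\to\bmin^{+}}\psi(\alpha)=P^{*}$ and at most $P^{*}$, so it equals $P^{*}$ and is attained by some $\nu^{*}\in\cM_\Phi(w)$ with $\int\varphi\,d\nu^{*}=\bmin$. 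Such a $\nu^{*}$ is a localized equilibrium state at the boundary, contradicting the hypothesis. Therefore $|A|=1$: there is exactly one ergodic localized equilibrium state (indeed exactly one localized equilibrium state). One may alternatively obtain $|A|\le 1$ directly from \emph{strict} concavity of $\psi$, which amounts to $D^{2}P(g(\alpha))$ being positive definite and follows from property~(6) of \STP\ once one notes that $\dim\R(\varphi,\Phi)=m+1$ precludes any nontrivial $s\varphi+t\cdot\Phi$ from being cohomologous to a constant.

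I expect the crux to be the second step — showing that an interior localized equilibrium state is forced to sit in a fibre interior to $\R(\varphi,\Phi)$ over $w$, hence coincides with the distinguished measure $\mu_{g(\alpha)}$ — and, in the third step, the device of pushing a hypothetical interval of equilibrium states out to the boundary fibre $\alpha\in\{\bmin,\bmax\}$; the existence argument and the concavity computation are routine.
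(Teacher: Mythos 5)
Your argument is correct and reaches a stronger conclusion than the paper's. The paper's own proof never invokes concavity: it assumes infinitely many ergodic localized equilibrium states, extracts a weak$^\ast$ accumulation point $\mu$ of the corresponding $\mu_{g(\alpha_k)}$, uses upper semi-continuity of entropy to show $\mu$ is again a localized equilibrium state, applies the interior hypothesis and Proposition~\ref{protriv} to write $\mu=\mu_{g(\alpha)}$ with $\alpha$ an accumulation point of $\{\alpha_k\}$, and then invokes the identity theorem for the real-analytic function $\alpha\mapsto h_{\mu_{g(\alpha)}}+\int\varphi\,d\mu_{g(\alpha)}$ to force it constant, hence produces a boundary equilibrium state, a contradiction. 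This rules out infinitely many but, as written, leaves open the possibility of finitely many $\ge 2$. Your route replaces the accumulation-point step by the observation that $\psi$ is concave on $(\bmin,\bmax)$ (via convex combinations $\lambda\mu_{g(\alpha_1)}+(1-\lambda)\mu_{g(\alpha_2)}$ and affinity of entropy), so the maximizer set $A$ is an interval; real-analyticity then forces $\psi$ constant if $|A|>1$, after which the same push-to-the-boundary contradiction applies. This buys you genuine uniqueness of the localized equilibrium state, and your alternative via strict concavity (property~(6) of \STP, using that $\dim\R(\varphi,\Phi)=m+1$ precludes any nontrivial $s\varphi+t\cdot\Phi$ from being cohomologous to a constant, so $D^2P$ is positive definite and the Legendre dual $\tilde h$ is strictly concave) is a clean, self-contained shortcut to $|A|\le 1$ that bypasses the boundary argument entirely. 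The existence step and the identification of every interior localized equilibrium state with some $\mu_{g(\alpha)}$ are handled the same way in both proofs, via Proposition~\ref{protriv} and property~(4) of \STP.
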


\begin{proof}

Since there exists a localized equilibrium state of $\varphi$, we may conclude from Proposition \ref{protriv} the existence of an ergodic localized equilibrium state of $\varphi$ with respect to $\Phi$ and  $w$.
Suppose there exist infinitely many ergodic localized equilibrium states of $\varphi$. Again by Proposition
\ref{protriv} there exists a pairwise disjoint sequence $(\alpha_k)_{k\in\bN}\subset (\bmin,\bmax)$ such that each $\mu_{g(\alpha_k)}$ is an
ergodic localized equilibrium state of $\varphi$. Let $\mu$ be a weak$^\ast$ accumulation point of the measures $\mu_{g(\alpha_k)}$. It follows that $\mu$ is also a localized equilibrium state of $\varphi$.
Recall that there are no localized equilibrium states at the boundary Thus, Proposition \ref{protriv} implies that $\mu=\mu_{g(\alpha)}$ for some $\alpha\in (\bmin,\bmax)$. We conclude that the function $\alpha\mapsto h_{\mu_{g(\alpha)}} + \int \varphi\ d \mu_{g(\alpha)}$ is constant on a non-discrete subset of $(\bmin,\bmax)$. Hence, $\alpha\mapsto h_{\mu_{g(\alpha)}} + \int \varphi\ d \mu_{g(\alpha)}$ is constant by the identity theorem. Thus, $\mu_{g(\alpha)}$ is a localized equilibrium state of $\varphi$ with respect to $\Phi$ and $w$ for every $\alpha\in (\bmin,\bmax)$. But this implies that there must exist a localized equilibrium state of $\varphi$ with respect to $\Phi$ and $w$ at the boundary which is a contradiction.

\end{proof}


\begin{thebibliography}{99}

\bibitem{BG}
L. Barreira and K. Gelfert, \emph{Dimension estimates in smooth dynamics: a survey of recent results}, Ergodic Theory and Dynamical Systems {\bf 31} (2011), 641–-671.

\bibitem{BSS}L. Barreira, B. Saussol and J. Schmeling \emph{Higher-dimensional multifractal analysis}, J. Math. Pures Appl. {\bf 9} (2002), 67–-91.

\bibitem{BS} L. Barreira and B. Saussol,  \emph{Variational principles and mixed multifractal spectra}, Trans. Amer. Math. Soc. \textbf{353} (2001), 3919–-3944.


%\bibitem{B}T. Bousch, \emph{Le poisson n'a pas d'aretes}, Annales de l'Institut Henri Poincar\'e (probabilit\'es et statistiques) \textbf{36} (2000),  489--508.

\bibitem{Bo1}R. Bowen,  Equilibrium states and the ergodic
theory of Anosov diffeomorphisms, Lecture Notes in Math. {\bf
470}, Springer-Verlag, Berlin, 1975.

\bibitem{B1}R. Bowen,  \emph{Hausdorff dimension of quasicircles},  Inst. Hautes \'Etudes Sci. Publ. Math. {\bf 50} (1979), 11--25.

\bibitem{Bo2}R. Bowen, \emph{Some systems with unique equilibrium states}, Math. Systems Theory \textbf{8} (1974/75), 193–-202.

\bibitem{C}V. Climenhaga \emph{Topological pressure of simultaneous level sets}, Nonlinearity \textbf{26}, (2013), 241–-268.

\bibitem{CT}V. Climenhaga and Thompson, \emph{Equilibrium states beyond specification and the Bowen property}, Journal London Mathematical Society (2) \textbf{87} (2013), 401–-427.

\bibitem{DGS}M. Denker,  C. Grillenberger and- K. Sigmund,
\emph{Ergodic theory on compact spaces},
Lecture Notes in Mathematics, Vol. 527. Springer-Verlag, Berlin-New York, 1976. iv+360 pp.

%\bibitem{GW1}K. Gelfert and C. Wolf,
%\emph{Topological pressure via periodic points}, Trans. Amer. Math. Soc. \textbf{360} (2008),  545-561.
%\bibitem{GW2}K. Gelfert and C. Wolf, \emph{On the distribution of periodic orbits}, Discrete Contin. Dyn. Syst. \textbf{26} (2010), 949–966.
%\bibitem{GM}W. Geller and M. Misiurewicz, \emph{Rotation and entropy}, Trans. Amer. Math. Soc., \textbf{351} (1999), 2927-2948.
%\bibitem{grisvard}P. Grisvard, \emph{Elliptic problems in nonsmooth domains, Monographs and Studies in Mathematics} vol. 24,
%Pitman (Advanced Publishing Program), Boston, MA, 1985

\bibitem{HRu}N. Haydn and D. Ruelle,
\emph{Equivalence of Gibbs and equilibrium states for homeomorphisms satisfying expansiveness and specification},
Comm. Math. Phys. \textbf{148} (1992), 155–-167.

\bibitem{F}H. Federer, \emph{Geometric measure theory}, Springer-Verlag Berlin Heidelberg New York, 1996.

\bibitem{FDPV}T. Fisher, L. Diaz, M. Pacifico and J. Vieitez, \emph{Entropy-expansiveness for partially hyperbolic diffeomorphisms},  Discrete Contin. Dyn. Syst. \textbf{32} (2012), 4195–-4207.

\bibitem{He} L. He, J. Lv and L. Zhou, \emph{Definition of measure-theoretic pressure using spanning sets}, Acta Math. Sinica, English series \textbf{20} (2004), 709--718.

\bibitem{Je} O.~Jenkinson, \emph{Rotation, entropy, and equilibrium states}, Trans. Amer. Math. Soc. \textbf{353} (2001), 3713--3739.
%\bibitem{Je1} O.~Jenkinson, \emph{Geometric Barycentres of Invariant Measures for Circle Maps}, Ergodic Theory and Dynamical Systems \textbf{21} (2001), 511--532.
%\bibitem{Je2} O. Jenkinson, \emph{Frequency locking on the Boundary of the Barycentre Set}, Experimental Mathematics \textbf{9} (2000), 309--317.
\bibitem{Kat:80} A.~Katok, \emph{Lyapunov exponents, entropy and periodic
    orbits for diffeomorphisms}, Publ. Math., Inst. Hautes
  \'Etud. Sci. \textbf{51} (1980), 137--173.

\bibitem{KH}A. Katok and B. Hasselblatt, \emph{Introduction to the modern theory of dynamical systems. With a supplementary chapter by Katok and Leonardo Mendoza}, Encyclopedia of Mathematics and its Applications, {\bf 54}, Cambridge University Press, Cambridge, 1995.

\bibitem{Ke}G. Keller, \emph{Equilibrium states in ergodic theory}, London Mathematical Society Student Texts, \textbf{42}, Cambridge University Press, Cambridge, 1998. x+178 pp.

%\bibitem{KMG}S. Kim, R. S. MacKay and J. Guckenheimer, \emph{Resonance regions for families of torus maps}, Nonlinearity 2, (1989), 391-404.

\bibitem{KW}T. Kucherenko and C. Wolf, \emph{Geometry and entropy of generalized rotation sets}, Israel Journal of Mathematics, to appear.

\bibitem{KW2}T. Kucherenko and C. Wolf, \emph{On finitelness of localized equilibrium states}, in preparation.

%\bibitem{Kw1}J. Kwapisz, \emph{Every convex polygon with rational vertices is a rotation set}, Ergodic Theory and Dynamical Systems \textbf{12} (1992), 333--339.
%\bibitem{Kw2}J. Kwapisz, \emph{A toral diffeomorphism with a nonpolygonal ratation set},  Nonlinearity 8, (1995), 461-476.

%\bibitem{Kw3}J. Kwapisz, \emph{A priori degeneracy of one-dimensional rotation sets for periodic point free torus maps}, Trans. Amer. Math. Soc. 354 (2002), 2865-2895

\bibitem{Lo} P. Loeb, \emph{On the Besicovitch covering theorem}, SUT J. Math. \textbf{25} (1989),  51-–55.

\bibitem{MM} A.~Manning and H.~McCluskey, \emph{Hausdorff dimension
for horseshoes}, Ergodic Theory Dynamical Systems \textbf{3} (1983),
251--260.


%\bibitem{Ma}P. Mattila, \emph{Geometry of sets and measures in Euclidean spaces. Fractals and rectifiability}, Cambridge University Press, Cambridge, U.K., 1995.

\bibitem{Mi1}M. Misiurewicz, \emph{Rotation Theory}, Misiurewicz's webpage.


%\bibitem{Mi2}M. Misiurewicz, \emph{Diffeomorphism without any measure with maximal entropy}, Bull.
%Acad. Polon. Sci., Ser. sci. math., astr. et phys, \textbf{21} (1973), 903-910.

%\bibitem{MZ}M. Misiurewicz and K. Ziemian, \emph{Rotation sets and ergodic measures for torus homeomorphisms}, Fundam. Math. \textbf{137} (1991), 45--52.


\bibitem{N} S.~Newhouse, \emph{Continuity properties of entropy},  Ann. of
  Math. (2)  \textbf{129} (1989), 215--235.

\bibitem{P} Y. Pesin, \emph{Dimension Theory in Dynamical Systems: Contemporary Views and Applications}, Chicago Lectures in Mathematics, Chicago University Press, Chicago, 1997.
    
\bibitem{PP}Ya. Pesin and B. Pitskelʹ, \emph{Topological pressure and the variational principle for noncompact sets}, (Russian) Funktsional. Anal. i Prilozhen. \textbf{18} (1984), 50–-63.

\bibitem{Po}H. Poincar\'e, \emph{Sur les cousbes d\'efinies par les \'equations diff\'erentielles}, Euvres Compl\`etes, tome 1, Gauthier-Villars, Paris, (1952), 137-158.

\bibitem{PU} F.~Przytycki and M.~Urbanski, \emph{Conformal fractals: ergodic theory methods},
London Mathematical Society Lecture Note Series, 371. Cambridge University Press, Cambridge, 2010. x+354 pp.

\bibitem{Ru2}D. Ruelle, \emph{Repellers for Real Analytic Maps},
Ergodic Theory and Dynamical Systems {\bf 2} (1982), 99--107.

\bibitem{Ru} D.~Ruelle, \emph{Thermodynamic Formalism}, Cambridge:
  Cambridge Univ. Press, 2004.
  
\bibitem{TV} F. Takens and E. Verbitskiy, \emph{On the variational principle for the topological entropy of certain non-compact sets}, Ergodic Theory Dynam. Systems \textbf{23} (2003), 317–-348.
  
\bibitem{T}D. Thompson, \emph{A thermodynamic definition of topological pressure for non-compact sets}, Ergodic Theory and Dynamical Systems \textbf{31} (2011), 527--547.

\bibitem{Wal:81} P.~Walters, \emph{An introduction to ergodic theory},
  Graduate Texts in Mathematics 79, Springer, 1981.
\bibitem{Z}K. Ziemian, \emph{Rotation sets for subshifts of finite type}, Fund. Math. \textbf{146} (1995), 189--201.
\end{thebibliography}
\end{document}